\documentclass[a4paper, dvipdfmx, 12pt]{amsproc}

\usepackage{fullpage}
\usepackage{amsmath, amsthm, amssymb, mathtools, mathrsfs}

\usepackage{mathtools}

\usepackage{hyperref}

\usepackage{chessfss}
\setcounter{MaxMatrixCols}{20}
\usepackage{tikz-cd}
\usepackage{tikz}
\usetikzlibrary{intersections, calc, arrows.meta}

\usepackage{graphicx}
\usepackage{here}
\usepackage{time}

\usepackage{xcolor}
\usepackage[capitalize,nameinlink,noabbrev,nosort]{cleveref}
\hypersetup{
	colorlinks=true,       
	linkcolor=blue,          
	citecolor=red,        
	filecolor=brown,      
	urlcolor=brown,           
}

\makeatletter
\@namedef{subjclassname@2020}{%
  \textup{2020} Mathematics Subject Classification}
\makeatother


\newtheorem{theoremcounter}{Theorem Counter}[section]

\theoremstyle{definition}
\newtheorem{dfn}[theoremcounter]{Definition}
\newtheorem{remark}[theoremcounter]{Remark}

\theoremstyle{plain}
\newtheorem{lem}[theoremcounter]{Lemma}
\newtheorem{proposition}[theoremcounter]{Proposition}

\newtheorem{thm}[theoremcounter]{Theorem}

\numberwithin{equation}{section}


\newcommand{\Ker}{\operatorname{Ker}}

%
%



\begin{document}

\title[]{The $p$-adic constant for mock modular forms associated to CM forms I\hspace{-1.2pt}I} 

\author{Ryota Tajima} 
\address{Joint Graduate School of Mathematics for Innovation, Kyushu University, 744 Motooka, Nishi-ku, Fukuoka, 819-0395, Japan}
\email{ryota.tajima.123@gmail.com}

\thanks{The author was supported by JSPS KAKENHI Grant Number JP23KJ1720 and WISE program (MEXT) at Kyushu University. 
}



\maketitle

\begin{abstract}
For a normalized newform $g \in S_{k}(\Gamma_{0}(N))$ with complex multiplication by an imaginary quadratic field $K$,  there is a mock modular form $F^{+}$ corresponding to $g$. K. Bringmann et al. \cite{bringmann2012mock} modified $F^{+}$  in order to obtain a $p$-adic modular form by a certain $p$-adic constant $\alpha_{g}$. In addition, they \cite{bringmann2012mock} showed that if $p$ is split in $\mathcal{O}_{K}$ and $p \nmid N$, then $\alpha_{g}=0$. On the other hand, the author \cite{tajima2023p} showed that $\alpha_{g}$ is a $p$-adic unit for an inert prime $p$ satisfying that $p\nmid 2N$ when $\dim_{\mathbb{C}} S_{k}(\Gamma_{0}(N))=1$. In this paper, under mild condition, we determine the $p$-adic valuation of $\alpha_{g}$ for an inert prime $p$ and a general CM form $g$ of weight $2$ with rational Fourier coefficients.
\end{abstract}

\section{Introduction}
Ramanujan found an interesting and mysterious $q$-series, mock theta function. These $q$-series are not modular forms, but they behave like modular forms. Many mathematicians tried to investigate their properties and reveal the source of their properties, which are similar to modular forms. In 2002, S. Zwegers \cite{zwegers2008mock} figured out the origin of their properties. He showed that Ramanujan's mock theta function is the holomorphic part of some harmonic Maass form $F$. The holomorphic part of a harmonic Maass form $F^{+}$ is called mock modular form.

Fourier coefficients of a mock modular form are transcendental in general. However, for a normalized newform $g$, we can associate mock modular forms that have algebraic properties. It is known that there is a surjective map $\xi _{2-k}:=2iy^{2-k}\overline{( \frac{\partial }{\partial \overline{\tau}}) }$ from harmonic Maass forms to cusp forms, and we consider lifts of $g$ by $\xi_{2-k}$. J. H. Bruinier, K. Ono, and R. C. Rhoades \cite{bruinier2008differential} found lifts of $g$ satisfying a certain algebraic properties. These lifts are called good for $g$. (cf. Definition  \ref{good}.) For example, if $g$ has complex multiplication and $F$ is good for $g$, then all Fourier coefficients of $F^{+}$ are algebraic.

Let $g \in S_{k}(\Gamma_{0}(N))$ be a normalized newform with complex multiplication by an imaginary quadratic field $K$ and $F$ be good lift of $g$. Since all Fourier coefficients of $F^{+}$ are algebraic, we could regard $F^{+}$ as an element in $\mathbb{C}_{p}(\!(q)\!)$ and investigate $p$-adic properties of $F^{+}$. K. Bringmann, P. Guerzhoy, and B. Kane \cite{bringmann2012mock} found the $p$-adic number $\alpha_{g}$ and constructed a $p$-adic modular form by using $\alpha_{g}$. We explain their results when $p$ is inert in $\mathcal{O}_{K}$ and $p \nmid N$. We denote the $n$-th Fourier coefficients of $g$ and $F^{+}$ by $a_{g}(n)$ and $a^{+}(n)$. For any $p$-adic number $\gamma \in \mathbb{C}_{p}$, the formal power series $\mathcal{F}_{\gamma}(q) \in \mathbb{C}_{p}(\!(q)\!)$ is defined by
\begin{align*}
\mathcal{F}_{\gamma}(q):=F^{+}(q)-\gamma E_{g \mid V_{p}}(q)= \sum _{n\gg -\infty}\left( a^{+}( n) -\gamma n^{1-k}a_{g}(n/p) \right) q^{n}
\end{align*}
where $E_{g \mid V_{p}}(q)=\sum_{n >0}n^{1-k}a_{g}(n/p)q^{n}$. Although this $q$-series $\mathcal{F}_{\gamma}(q)$ is just a formal power series, K. Bringmann et al. \cite{bringmann2012mock} showed that there exists a unique $p$-adic number $\alpha_{g}$ such that $\mathcal{F}_{\alpha_{g}}$ is a Fourier expansion of a $p$-adic modular form. In addition, L. Candelori and F. Castella \cite{candelori2017geometric} showed that $\mathcal{F}_{\alpha_{g}}$ is an overconvergent modular form. We think that investigating the $p$-adic number $\alpha_{g}$ is a clue to figure out $p$-adic properties of mock modular forms. K. Bringmann et al. \cite{bringmann2012mock} showed that $\alpha_{g}=0$ when $p$ is split in $\mathcal{O}_{K}$ and $p\nmid N$. On the other hand, the author \cite{tajima2023p} showed that $\alpha_{g}$ is a $p$-adic unit when $p$ is inert in $\mathcal{O}_{K}$, $p \nmid 2N$, and $\dim_{\mathbb{C}} S_{k}(\Gamma_{0}(N))=1$. (cf. \cite[Theorem 1.5]{tajima2023p}.) In this paper, we eliminate the assumption on the dimension of the space $S_{k}(\Gamma_{0}(N))$ and determine the $p$-adic valuation of $\alpha_{g}$ when $g$ is of weight $2$ with rational Fourier coefficients. Especially, we show that the $p$-adic valuation of $\alpha_{g}$ is related to the degree of a modular parametrization.

We explain the main result. We define the period lattice of $g$ and an elliptic curve $E/ \mathbb{Q}$ by 
\begin{align*}
\Lambda:=\left\{ \int ^{\gamma(\infty)}_{\infty}2\pi ig(\tau) d\tau \mid \gamma \in \Gamma _{0}(N) \right\},
\end{align*} 
\begin{align}\label{daen shiki}
E : y^{2}=4x^{3}-g_{2}(\Lambda)x-g_{3}(\Lambda).
\end{align}
It is known that $c_{4}:=12g_{2}(\Lambda)$ and $c_{6}:=216g_{3}(\Lambda)$ are rational integers. (cf.  \cite[Section 2.14]{cremona1997algorithms}.) Hence, the Weierstrass model \eqref{daen shiki} is defined over $\mathbb{Z}[1/6]$.
There is a modular parameterization $\phi$ from the modular curve $X_{0}(N)$ to $E$ defined by
\begin{align*}
\phi : \tau \mapsto -\int^{i\infty}_{\tau} 2\pi ig(\tau')d\tau' \text{ mod } \Lambda,
\end{align*}
and we denote the degree of $\phi$ by $C_{g}$.
Our main result is as follows.

\begin{thm}
Let $g \in S_{2}(\Gamma_{0}(N))$ be a normalized newform with complex multiplication by an imaginary quadratic field $K$ with rational Fourier coefficients. We assume that $p \geq 5$ and the Weierstrass model \eqref{daen shiki} has good reduction at $p$.
If $p$ is inert in $\mathcal{O}_{K}$, then $C_{g}\alpha_{g}$ is a $p$-adic unit. 
\end{thm}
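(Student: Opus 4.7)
The plan is to combine the characterization of $\alpha_{g}$ from \cite{bringmann2012mock} with an explicit CM-period computation, generalizing the strategy of \cite{tajima2023p} so as to remove the hypothesis $\dim_{\mathbb{C}}S_{k}(\Gamma_{0}(N))=1$. The key structural inputs are: (i) since $g$ has weight $2$, rational Fourier coefficients and CM by $K$, it is the newform attached to an elliptic curve $E'$ isogenous over $\mathbb{Q}$ to the curve $E$ defined in \eqref{daen shiki}; (ii) because $p$ is inert in $\mathcal{O}_{K}$ and $E$ has good reduction at $p$, the reduction $E/\mathbb{F}_{p}$ is supersingular, so $a_{g}(p)=0$ and the Hecke recursion reads $a_{g}(p^{j+1})=-p\cdot a_{g}(p^{j-1})$; and (iii) by the good-lift property and \cite{bruinier2008differential}, the algebraic Fourier coefficients $a^{+}(n)$ admit an Eichler-integral description.

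I would first extract a workable formula for $\alpha_{g}$ by applying $U_{p}$ iteratively to the $p$-adic modular form $\mathcal{F}_{\alpha_{g}}$, following \cite{bringmann2012mock} and \cite{tajima2023p}, to obtain an identity of the schematic form
\[
\alpha_{g} \;=\; \lim_{j\to\infty}\left(\text{ratio of }a^{+}(p^{j})\text{ and }a_{g}(p^{j-1})\right) \pmod{p^{N}}
\]
for every $N$; the inert recursion for $a_{g}(p^{j})$ makes this limit tractable. In contrast to \cite{tajima2023p}, where $\dim S_{k}=1$ trivially identified the image of the relevant $U_{p}$-step with the span of $g$, here one must project onto the $g$-isotypic component of the space of $p$-adic modular forms via a Hecke idempotent. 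Using the Eichler-integral description of $F^{+}$, the limit can then be rewritten, up to an explicit $p$-adic unit, as a CM-period ratio of the form $(\text{algebraic CM period})/(\Omega^{+}\Omega^{-})$, where $\Omega^{\pm}$ generate $\Lambda$.

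Next I would bring in the degree $C_{g}$ via the classical identity
\[
C_{g}\cdot |\Omega^{+}\Omega^{-}| \;=\; 16\pi^{2}\,\langle g,g\rangle_{\mathrm{Pet}}\cdot c,
\]
where $c$ is a rational number whose $p$-adic valuation is controlled by the Manin constant of $E$ (which, under the hypotheses $p\geq 5$ and good reduction of $E$ at $p$, is a $p$-adic unit). Multiplying the formula from the previous step by $C_{g}$ then converts the period denominator into the Petersson norm, yielding an expression for $C_{g}\alpha_{g}$ that is manifestly $p$-integral. To upgrade $p$-integrality to $p$-unit I would appeal to the supersingular reduction of $E$: modulo $p$, the relevant CM-period ratio reduces, via Deuring's theory, to a nonzero residue (intuitively, the Hasse-invariant-twisted evaluation at a CM point of the supersingular locus), exactly as in \cite[Theorem 1.5]{tajima2023p}.

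The main obstacle is the Hecke projection in the first step. Without the dimension-one hypothesis of \cite{tajima2023p}, $\alpha_{g}$ is no longer identified with a single Fourier-coefficient limit, and the projector onto the $g$-isotypic component of $S_{2}(\Gamma_{0}(N))$ can \emph{a priori} have denominators divisible by $p$. Showing that these denominators contribute at worst the factor $C_{g}$ --- and no more --- is the core technical work, and this is precisely where the particular choice of $C_{g}$ (rather than a Petersson norm or congruence number) enters the statement of the theorem.
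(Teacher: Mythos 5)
Your proposal takes a different route from the paper, but it has a genuine gap --- one that you yourself flag and then do not close. Everything hinges on your first step: converting the limit $\alpha_{g}=\lim_{m}p^{2m+1}a^{+}(p^{2m+1})/(-p)^{m}$ of Theorem \ref{def alpha_g} into ``a CM-period ratio, up to an explicit $p$-adic unit.'' This is asserted, not proved, and it is exactly the point where the hypothesis $\dim_{\mathbb{C}}S_{k}(\Gamma_{0}(N))=1$ was doing the work in \cite{tajima2023p}. The coefficients $a^{+}(p^{2m+1})$ do not admit an ``Eichler-integral description'': $F^{+}$ is a mock modular form, its holomorphic coefficients are only known to be algebraic via Theorem \ref{Thm alg of cof}, and the good lift $F$ is unique only up to adding weakly holomorphic forms, so any period formula for the limit must be extracted from the harmonic Maass form itself, not from $g$. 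You correctly identify that without the dimension-one hypothesis one must control the denominators of a Hecke projector, and you state that showing these denominators contribute ``at worst the factor $C_{g}$ --- and no more --- is the core technical work''; that sentence is an admission that the proof is not there. Two further steps are also not sound as written: the identity $C_{g}\cdot|\Omega^{+}\Omega^{-}|=16\pi^{2}\langle g,g\rangle_{\mathrm{Pet}}\cdot c$ involves the transcendental Petersson norm, so the claim that multiplying by $C_{g}$ yields something ``manifestly $p$-integral'' has no $p$-adic meaning until an algebraic avatar is specified; and the final appeal to Deuring's theory for nonvanishing modulo $p$ is a placeholder rather than an argument.

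For contrast, the paper never evaluates the coefficient limit directly and never touches Hecke projectors, Petersson norms, or Manin constants. It proves (Theorem \ref{mu=alpha}) that $\alpha_{g}$ equals the coefficient $\mu_{p}$ in the unique decomposition $F^{+}-\lambda_{p}E_{g}-\mu_{p}E_{g\mid V_{p}}\in\mathbb{Z}_{p}(\!(q)\!)\otimes\mathbb{Q}_{p}$ (and that $\lambda_{p}=0$); it replaces $C_{g}F^{+}$ by the Weierstrass mock modular form $\mathcal{N}^{+}(q)=\zeta(\Lambda,E_{g}(q))-\mathbb{S}(\Lambda)E_{g}(q)$ using Theorem \ref{thm N-CF bounded}, which is where the degree $C_{g}$ enters; and it then computes the decomposition of $\zeta(\Lambda,E_{g}(q))$ against the basis $\{dE_{g},dE_{g\mid V_{p}}\}$ of the rank-two crystalline cohomology of the Honda formal group with logarithm $E_{g}(q)$, so that $C_{g}\alpha_{g}=-eA_{2}$ with $A_{2}$ a $p$-adic unit by the result of \cite{bannai2017radius}. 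If you want to salvage your approach, the missing ingredient is precisely a substitute for this cohomological computation: a proof, valid without any dimension hypothesis, that the limit of $p^{2m+1}a^{+}(p^{2m+1})/(-p)^{m}$ has the $p$-adic valuation you claim. As it stands, the proposal is a plan with its central step left open.
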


We explain the outline of the proof and the reason why we could eliminate the assumption on the dimension of the space $S_{k}(\Gamma_{0}(N))$ in \cite{tajima2023p}. K. Bringmann et al. \cite{bringmann2012mock} showed that the $p$-adic number $\alpha_{g}$ is written by 
\begin{align}\label{eq. alpha_g}
\alpha_{g}=\lim _{m \rightarrow \infty }\frac{p^{2m+1}a^{+}(p^{2m+1}) }{(-p)^{m}}.
\end{align} 
In \cite[Theorem 1.5]{tajima2023p}, we use the equation \eqref{eq. alpha_g} in order to show that $\alpha_{g}$ is a $p$-adic unit. Therefore, the proof of \cite[Theorem 1.5]{tajima2023p} is based on an explicit calculation of the $p$-adic valuation of $a^{+}(p^{2m+1})$.
In this paper, we give a new characterization of $\alpha_{g}$ that does not use Fourier coefficients. Let $E_{g}(q):=\sum_{n>0}\left(a_{g}(n)/n\right)q^{n}$ be the Eichler integral of $g$. P. Guerzhoy \cite{guerzhoy2014zagier} showed that there exists unique $p$-adic numbers $\lambda_{p}$ and $\mu_{p}$ such that
\begin{align}\label{eq. imp}
F^{+}(q)-\lambda_{p}E_{g}(q)-\mu_{p}E_{g\mid V_{p}}(q) \in \mathbb{Z}_{p}(\!(q)\!) \otimes \mathbb{Q}_{p}
\end{align}
holds.  (cf. \cite[Proposition 5]{guerzhoy2014zagier}.) He is interested in the $p$-adic number $\lambda_{p}$ because of Zagier's observation.
In this paper, by contrast, we focus on the $p$-adic number $\mu_{p}$ and show that $\alpha_{g}$ is equal to $\mu_{p}$. By Honda theory, there exists the formal group $\mathcal{G}$ defined over $\mathbb{Z}_{p}$ whose logarithm is equal to $E_{g}(q)$. It is known that the de Rahm cohomology $H_{\text{dR}}^{1}(\mathcal{G})$ is a free $\mathbb{Z}_{p}$ module of rank $2$ because $p$ is inert in $\mathcal{O}_{K}$. The equation \eqref{eq. imp} gives a sort of $p$-adic Hodge decomposition of $F^{+}(q)$ in $H_{\text{dR}}^{1}(\mathcal{G})$. On the other hand, P. Guerzhoy \cite{guerzhoy2014zagier} and C. Alfes, M. Griffin, K. Ono, and L. Rolen \cite{alfes2015weierstrass} defined a $q$-series $\mathcal{N}(q)=\mathcal{N}^{+}(q)+\mathcal{N}^{-}(q)$ by
\begin{align*}
\mathcal{N}^{+}(q) := \zeta(\Lambda, E_{g}(q))-\mathbb{S}(\Lambda)E_{g}(q), \quad \mathcal{N}^{-}(q):=- \dfrac{\pi}{a(\Lambda)}\overline{E_{g}(q)}
\end{align*} 
where $\mathbb{S}(\Lambda)$ is the Eisenstein number of weight $2$ and $a(\Lambda)$ is the area of the fundamental parallelogram of $\Lambda$. (cf. Section \ref{sec p-adic mock 0}.) We note that the Eisenstein number $\mathbb{S}(\Lambda)$ is a rational number because $E$ has complex multiplication. $\mathcal{N}(q)$ has a certain harmonic property.
C. Alfes et al. \cite{alfes2015weierstrass} called $\mathcal{N}^{+}(q)$ the Weierstrass mock modular form. In addition, P. Guerzhoy \cite{guerzhoy2014zagier} showed that 
\begin{align}\label{eq. 1}
\mathcal{N}^{+}(q)-C_{g}F^{+}(q) \in \mathbb{Z}_{p}(\!(q)\!) \otimes \mathbb{Q}_{p}
\end{align}
for almost all prime number $p$, and we show that the equation \eqref{eq. 1} holds for all inert prime number $p \nmid N$. (cf. \cite[Proposition 2]{guerzhoy2014zagier}.)
Therefore, we investigate Weierstrass mock modular form instead of $F^{+}(q)$. By using the $p$-adic decomposition of Weierstrass $\zeta$-function, we could show that $C_{g}\alpha_{g}$ is a $p$-adic unit.

\section*{Acknowledgements}
The author shows my most significant appreciation to Professor Shinichi Kobayashi for giving numerous and extremely helpful comments on this paper. He checked my draft many times, and the author had a lot of discussions with him about my paper. He also recommended me to read his paper \cite{bannai2017radius}. I also would like to thank Dr. Keiichiro Nomoto. We had many discussions, and he gave me valuable comments about calculators. He helped me to compute some examples of the period lattice by using mathematical software. I was supported by JSPS KAKENHI Grant Number JP23KJ1720 and WISE program (MEXT) at Kyushu University.

 \section{Harmonic Maass forms and weakly holomorphic modular forms}
 In this section, we introduce basic facts for harmonic Maass forms. We refer to \cite{bruinier2008differential} \cite{bringmann2017harmonic} as main references. 
 
Throughout, we denote the upper-half of the complex plane by $\mathbb{H}$.

\begin{dfn}[\cite{bruinier2008differential}]
Let $k \in \mathbb{Z}$ and $N \in \mathbb{N}$.
If a $C^{\infty}$ function $F$ from $\mathbb{H}$ to $\mathbb{C}$ satisfies under conditions, then we say that $F$ is a harmonic Maass form of weight $k$ on $ \Gamma _{0}(N)$. 

(1)$F\left(\dfrac{a\tau+b}{c\tau+d}\right)=(c\tau+d)^{k}F(\tau)$ for all $\gamma =  \bigl(
\begin{smallmatrix}
   a & b \\
   c & d
\end{smallmatrix}
\bigl) \in \Gamma _{0}( N)$.

(2)$\Delta_{k}F=0$, where
 $\Delta_{k}=-y^{2}\left( \dfrac{\partial ^{2}}{\partial x^{2}}+\dfrac{\partial ^{2}}{\partial y^{2}}\right) +iky\left( \dfrac{\partial }{\partial x}+i\dfrac{\partial }{\partial y}\right) $. 
 
(3)There is a polynomial $P_{\infty}(z) \in \mathbb{C}[q^{-1}]$ such that
\begin{align*}
F(\tau)-P_{\infty}(\tau)=\text{O} (e^{-\varepsilon y})  \text{ as } y  \rightarrow \infty \text{ for some } \varepsilon >0.
\end{align*}
Similar conditions are required at other cusps.
 \end{dfn}

We denote the space of harmonic Maass forms by $H_{k}( \Gamma _{0}( N) )$.

It is known that a harmonic Maass form $F(\tau)$ of weight $2-k$ has a Fourier expansion of the form
\begin{align*}
F(\tau)=\sum _{n\gg -\infty}a^{+}( n) q^{n}+\sum _{n <0}a^{-}( n) \Gamma ( k-1,4\pi  \left| n\right| y) q^{n} 
\end{align*}
where  $\Gamma(s, z):=\int ^{\infty }_{z}e^{-t}t^{s}\dfrac{dt}{t}$.
From this Fourier expansion, we define the holomorphic part $F^{+}(\tau)$ and the non-holomorphic part $F^{-}(\tau)$ by

\begin{align*}
F^{+}(\tau):=\sum _{n\gg -\infty}a^{+}( n) q^{n},
\end{align*}

\begin{align*}
F^{-}(\tau):=\sum _{n <0}a^{-}( n) \Gamma ( k-1,4\pi  \left| n\right| y) q^{n}.
\end{align*}

In addition, the partial sum $\sum _{n \leq 0}a^{+}( n) q^{n}$ is called the principal part of $F(\tau)$ at the cusp $\infty$.

Especially, we have that $\Gamma (k-1,4\pi  \left| n\right| y)=e^{-4\pi \left| n \right| y}$ when $k=2$. Hence if $F(\tau) \in H_{0}( \Gamma _{0}( N) )$, then
\begin{align*}
F^{-}(\tau)=\sum_{n>0}a^{-}(-n)\bar{q}^{n}
\end{align*} 
holds where $\bar{q}$ is the complex conjugate of $q$.

\begin{dfn}[{\cite[Definition 5.16]{bringmann2017harmonic}}]
We define a mock modular form by the holomorphic part of a harmonic Maass form whose non-holomorphic part is non-trivial.
\end{dfn}

 Next, we introduce two important differential operators $D^{k-1}$ and $\xi_{2-k}$. We denote the space of weakly holomorphic modular forms of weight $k$ and level $N$ by $M_{k}^{!}( \Gamma _{0}( N) )$.  For a ring $R$, we denote the space of cusp forms (resp. weakly holomorphic modular forms) of weight $k$ and level $N$ whose all Fourier coefficients at $\infty$ are in $R$ by $S_{k}(\Gamma_{0}(N), R)$ (resp. $M_{k}^{!}(\Gamma_{0}(N), R)$). For $k \in \mathbb{Z}_{\geq 2}$, we define two operators $D:=\frac{1}{2 \pi i}\frac{d}{d\tau}$ and $\xi _{2-k}:=2iy^{2-k}\overline{( \frac{\partial }{\partial \overline{\tau}}) }$. Then, these operators give maps
 \begin{align*}
& D^{k-1}:H_{2-k} ( \Gamma _{0}( N) ) \rightarrow M_{k}^{!}( \Gamma _{0}( N) ), 
\\
 &\xi_{2-k} : H_{2-k} ( \Gamma _{0}( N) ) \rightarrow S_{k}( \Gamma _{0}( N) ), 
 \end{align*}
 and we have that
 \begin{align*}
 D^{k-1}(F^{-})=0, \quad \xi_{2-k}(F^{+})=0
 \end{align*}
 for $F \in H_{2-k}( \Gamma _{0}(N))$.
 
 \begin{thm}[{\cite[Corollary 3.8]{bruinier2004two}}]\label{xi}
The map $\xi_{2-k}:H_{2-k} ( \Gamma _{0}( N) ) \rightarrow S_{k}( \Gamma _{0}( N) ) $ is subjective, and $\Ker \xi_{2-k}=M_{2-k}^{!}(\Gamma_{0}(N))$ holds.
 \end{thm}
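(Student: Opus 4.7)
The plan is to establish the kernel identification by a direct differential-operator computation and then prove surjectivity by exhibiting explicit preimages via Maass--Poincaré series.

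First, I would verify $\Ker \xi_{2-k}=M_{2-k}^{!}(\Gamma_{0}(N))$ directly from the formula $\xi_{2-k}=2iy^{2-k}\overline{(\partial/\partial\bar\tau)}$. If $F \in H_{2-k}(\Gamma_0(N))$ satisfies $\xi_{2-k}F=0$, then $\partial F/\partial\bar\tau=0$, so $F$ is holomorphic on $\mathbb{H}$. The modular transformation law and condition (3) in the definition of a harmonic Maass form (a polynomial principal part at every cusp, together with the required decay of the error) then force $F$ to be a weakly holomorphic modular form, i.e. $F\in M_{2-k}^{!}(\Gamma_0(N))$. Conversely, every weakly holomorphic modular form is annihilated by $\partial/\partial\bar\tau$, hence by $\xi_{2-k}$, is harmonic, and has the required cusp behavior, so it lies in $H_{2-k}(\Gamma_0(N))$ and in $\Ker \xi_{2-k}$.

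For surjectivity, I would exploit the fact that the holomorphic Poincaré series $P_{k,m}$ $(m\geq 1)$ span $S_k(\Gamma_0(N))$. Thus it is enough, for each $m\geq 1$, to produce a harmonic Maass form $\mathcal{F}_{-m}\in H_{2-k}(\Gamma_0(N))$ with $\xi_{2-k}(\mathcal{F}_{-m})$ equal to an explicit nonzero multiple of $P_{k,m}$. The standard construction is to average a Whittaker-type seed $\mathcal{M}_{s,k}(-4\pi m y)\,e^{-2\pi i m x}$ over $\Gamma_\infty\backslash\Gamma_0(N)$, then specialize the spectral parameter $s$ to $1-k/2$. A direct computation of the Fourier expansion at $\infty$ (and at the other cusps), combined with term-by-term application of $\xi_{2-k}$, identifies $\xi_{2-k}(\mathcal{F}_{-m})$ with $(k-1)(4\pi m)^{k-1}P_{k,m}$ up to normalization. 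Writing an arbitrary $g\in S_k(\Gamma_0(N))$ as a finite linear combination of the $P_{k,m}$ then yields a preimage in $H_{2-k}(\Gamma_0(N))$.

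The main technical obstacle is the Maass--Poincaré series machinery itself: establishing absolute convergence of the averaged seed in a suitable range of $s$, meromorphic continuation to $s=1-k/2$, regularity of the value there (which is where the hypothesis $k\geq 2$ enters), verification that the resulting function is genuinely harmonic and has a polynomial principal part at every cusp, and pinning down the precise nonzero scalar produced when $\xi_{2-k}$ is applied to the Fourier expansion. These are delicate but standard special-function inputs; once they are in place, both the kernel identification and the surjectivity assertion of the theorem follow.
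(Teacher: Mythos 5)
Your outline is essentially correct, but note that the paper does not prove this statement at all: Theorem \ref{xi} is quoted directly from Bruinier--Funke \cite{bruinier2004two}, whose own argument is quite different from yours. There the exactness of $0 \to M_{2-k}^{!}(\Gamma_0(N)) \to H_{2-k}(\Gamma_0(N)) \to S_{k}(\Gamma_0(N)) \to 0$ is obtained cohomologically, by combining Serre duality on the modular curve with a residue computation of the pairing between harmonic Maass forms and cusp forms; surjectivity falls out of the vanishing of a cohomology group rather than from any explicit construction. Your route --- identifying the kernel by the elementary observation that $\xi_{2-k}F=0$ forces holomorphy on $\mathbb{H}$ and then meromorphy at the cusps via condition (3), and producing preimages of the holomorphic Poincar\'e series $P_{k,m}$ by specializing Maass--Poincar\'e series at the harmonic value of the spectral parameter --- is the standard constructive alternative, the one given for instance in \cite{bringmann2017harmonic}. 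The kernel computation is complete as you state it. For surjectivity the sketch is sound, subject to the caveats you already flag: for $k=2$ both the holomorphic and the Maass--Poincar\'e series require analytic continuation in $s$ (Hecke's trick), and one must verify regularity at the special point and the growth conditions at every cusp, not just at $\infty$. What your approach buys is explicit preimages with prescribed principal part $q^{-m}$ at $\infty$ and constant principal parts at the other cusps --- exactly the normalization used later for forms that are good for $g$; what the duality proof buys is brevity, uniformity in $k$, and independence from special-function estimates.
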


The image of $F$ by $\xi_{2-k}$ is called the shadow of $F^{+}$.

 \begin{dfn}[{\cite[Definition 7.5]{bringmann2017harmonic}}]\label{good}
 Let $g \in S_{k}( \Gamma _{0}( N) )$ be a normalized newform and $F_{g}$ be the Hecke field of $g$.
If a harmonic Maass form $F \in H_{2-k} ( \Gamma _{0}( N) )$ satisfies under conditions, we say that $F$ is good for $g$.

 (1)The principal part of $F$ at the $\infty$ belongs to $F_{g}[q^{-1}]$.
 
 (2)The principal part of $F$ at the other cusps of $\Gamma_{0}(N)$ are constant.
 
 (3)We have that $\xi _{2-k}(F) =\dfrac{g}{\left\| g\right\| ^{2}}$.
  \end{dfn}

 \begin{proposition}[{\cite[Proposition 5.1]{bruinier2008differential}}]
 If $g \in S_{k}(\Gamma_{0}(N))$ is  a normalized newform, then there is a harmonic Maass form $F \in H_{2-k}(\Gamma_{0}(N))$ which is good for $g$. 
 \end{proposition}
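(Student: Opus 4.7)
The plan is to build a good lift in two stages: first produce any preimage of the normalized shadow under $\xi_{2-k}$, then correct it by a weakly holomorphic form so that the remaining principal-part conditions hold. Concretely, I would begin by applying \Cref{xi} to obtain $F_{0} \in H_{2-k}(\Gamma_{0}(N))$ with $\xi_{2-k}(F_{0}) = g/\|g\|^{2}$, which secures condition~(3) of \Cref{good}. Because $\Ker \xi_{2-k} = M_{2-k}^{!}(\Gamma_{0}(N))$, any further modification $F_{0} \mapsto F_{0} - f$ with $f \in M_{2-k}^{!}(\Gamma_{0}(N))$ preserves (3), so the task reduces to choosing $f$ so that $F := F_{0} - f$ has constant principal part at every cusp $c \neq \infty$ and principal part at $\infty$ with coefficients in the Hecke field $F_{g}$.

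To achieve condition~(2), I would invoke the classical duality (due to Borcherds, refined by Bruinier--Funke) characterizing which tuples of principal parts at the cusps of $\Gamma_{0}(N)$ are realized by some $f \in M_{2-k}^{!}(\Gamma_{0}(N))$: such a tuple is admissible exactly when it pairs to zero against every $h \in S_{k}(\Gamma_{0}(N))$ under the Bruinier--Funke pairing. Because I am free to prescribe the principal part of $f$ at $\infty$ arbitrarily, these extra degrees of freedom suffice to kill the cuspidal obstruction produced by the principal parts of $F_{0}$ at the other cusps; subtracting the resulting $f$ makes the principal parts of $F$ at all cusps $c \neq \infty$ constant.

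For condition~(1), I would exploit the action of the Hecke algebra. Because $g$ is a normalized newform with Hecke eigenvalues generating $F_{g}$, there is an idempotent $e_{g}$ in the Hecke algebra $\mathbb{T}_{F_{g}}$ cutting out the $g$-isotypic summand of $S_{k}(\Gamma_{0}(N))$. The Hecke operators also act on $H_{2-k}(\Gamma_{0}(N))/M_{2-k}^{!}(\Gamma_{0}(N))$ compatibly with $\xi_{2-k}$, and this action preserves $F_{g}$-rationality of principal parts modulo weakly holomorphic forms. Applying $e_{g}$ to the class of $F$ and correcting by one more weakly holomorphic form (chosen with $F_{g}$-rational principal part) to restore (2) yields a representative whose principal part at $\infty$ lies in $F_{g}[q^{-1}]$.

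The hard part will be this final step: since $\|g\|^{2}$ is transcendental, one cannot Galois-average $F_{0}$ directly, and the interaction of the Hecke algebra with $\xi_{2-k}$ must be handled with the Petersson normalization in mind. The standard workaround is to carry out the $F_{g}$-descent inside the quotient $H_{2-k}(\Gamma_{0}(N))/M_{2-k}^{!}(\Gamma_{0}(N))$, where the transcendental factor $\|g\|^{-2}$ acts as a scalar that is inert to the $F_{g}$-structure on principal-part classes; one then chooses an $F_{g}$-rational representative in this quotient and lifts it back, relying on the existence result for weakly holomorphic forms with $F_{g}$-rational principal parts to make the lift itself $F_{g}$-rational.
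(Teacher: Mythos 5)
The paper does not prove this proposition; it is quoted directly from Bruinier--Ono--Rhoades, so your proposal has to be measured against the argument in that source. Your first two stages are sound and are essentially the standard ones: surjectivity of $\xi_{2-k}$ (\Cref{xi}) produces $F_{0}$ with the right shadow, and the Bruinier--Funke duality correctly characterizes which tuples of principal parts are realized in $M_{2-k}^{!}(\Gamma_{0}(N))$; since the functionals $h \mapsto a_{h}(n)$, $n \geq 1$, already span $S_{k}(\Gamma_{0}(N))^{\vee}$, the principal part at $\infty$ alone can absorb the obstruction coming from the other cusps, so conditions (2) and (3) of \Cref{good} can be arranged simultaneously.

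The gap is in your third stage. The assertion that the Hecke action on $H_{2-k}(\Gamma_{0}(N))/M_{2-k}^{!}(\Gamma_{0}(N))$ ``preserves $F_{g}$-rationality of principal parts'' is doing all the work and is neither proved nor even well posed, since at that point $F$ has no rationality to preserve; moreover $\xi_{2-k}$ intertwines $T_{2-k,p}$ with $p^{1-k}T_{k,p}$, so applying an idempotent built from Hecke operators rescales the shadow and disturbs condition (3) unless you renormalize, and Hecke operators also mix the principal parts at the various cusps, undoing stage two. Your closing paragraph concedes the difficulty without resolving it. The correct mechanism does not pass through Hecke idempotents at all: the Bruinier--Funke pairing identifies the class of $F$ modulo $M_{2-k}^{!}(\Gamma_{0}(N))$ with the linear functional $h \mapsto \{h,F\} = (h,\xi_{2-k}(F)) = (h,g)/\|g\|^{2}$ on $S_{k}(\Gamma_{0}(N))$. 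This functional, despite the transcendental factor $\|g\|^{-2}$, is algebraic: it sends $h$ to the coefficient of $g$ when $h$ is expanded in the newform (plus oldform) basis, hence takes values in $F_{g}$ on $S_{k}(\Gamma_{0}(N),\mathbb{Q})$. Since the pairing is given on a rational basis of $S_{k}(\Gamma_{0}(N))$ by the rational coefficient functionals $a_{h}(n)$, linear algebra over $F_{g}$ produces a principal part with coefficients in $F_{g}[q^{-1}]$ inducing the same functional; the difference with the actual principal part of $F$ pairs to zero with all cusp forms and is therefore the principal part of an element of $M_{2-k}^{!}(\Gamma_{0}(N))$, which you subtract. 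This yields condition (1) directly and makes your final stage unnecessary.
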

 
 \begin{remark}
 For a normalized newform $g$, a harmonic Maass form that is good for $g$ is not unique. 
 \end{remark}

 Although the definition of good for $g$ requires the algebraicity of the principal part, it is known that all Fourier coefficients of $F^{+}$ are algebraic when $g$ has complex multiplication.
 
 \begin{thm}[{\cite[Corollary 1.2]{ehlen2024harmonic}}]\label{Thm alg of cof}
 Let $g \in S_{k}( \Gamma _{0}( N) )$ be a normalized newform with complex multiplication. 
 If $F \in H_{2-k} ( \Gamma _{0}( N) )$ is good for $g$, 
 then all Fourier coefficients of $F^{+}$ are in $F_{g}$.
 \end{thm}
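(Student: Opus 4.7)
The strategy is to bypass the limit formula \eqref{eq. alpha_g} by identifying $\alpha_{g}$ with Guerzhoy's constant $\mu_{p}$ from \eqref{eq. imp}, then transferring to the Weierstrass mock modular form $\mathcal{N}^{+}$ via \eqref{eq. 1}, and finally reading off the $p$-adic valuation from the formal-group and de Rham structure at the inert (hence supersingular) prime $p$.

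First, I would prove $\alpha_{g} = \mu_{p}$. Since $\mathcal{F}_{\alpha_{g}}$ is a $p$-adic modular form, its Fourier coefficients are $p$-adically bounded, so $F^{+}(q) - \alpha_{g} E_{g \mid V_{p}}(q) \in \mathbb{Z}_{p}(\!(q)\!) \otimes \mathbb{Q}_{p}$. Comparing with the uniqueness of $(\lambda_{p}, \mu_{p})$ in \eqref{eq. imp}, and using that $E_{g}$ and $E_{g \mid V_{p}}$ are $p$-adically linearly independent modulo $\mathbb{Z}_{p}(\!(q)\!) \otimes \mathbb{Q}_{p}$ in the inert setting, this forces $\alpha_{g} = \mu_{p}$. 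Multiplying \eqref{eq. imp} by $C_{g}$ and substituting \eqref{eq. 1} then yields
\[
\mathcal{N}^{+}(q) - C_{g}\lambda_{p} E_{g}(q) - C_{g}\alpha_{g} E_{g \mid V_{p}}(q) \in \mathbb{Z}_{p}(\!(q)\!) \otimes \mathbb{Q}_{p},
\]
so the problem reduces to computing the $E_{g \mid V_{p}}$-coefficient in the $p$-adic Hodge-like decomposition of $\mathcal{N}^{+}$ and proving it is a $p$-adic unit.

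Here I would invoke Honda theory. Under the inert hypothesis, $E$ has supersingular reduction at $p$, and the formal group $\mathcal{G}/\mathbb{Z}_{p}$ with logarithm $E_{g}(q)$ has height $2$. Hence $H_{\text{dR}}^{1}(\mathcal{G})$ is free of rank $2$ over $\mathbb{Z}_{p}$, and the classes of $E_{g}(q)$ and $E_{g \mid V_{p}}(q)$ span $H_{\text{dR}}^{1}(\mathcal{G}) \otimes \mathbb{Q}_{p}$ modulo the integral lattice. The Weierstrass mock modular form $\mathcal{N}^{+}(q) = \zeta(\Lambda, E_{g}(q)) - \mathbb{S}(\Lambda) E_{g}(q)$ admits a $p$-adic decomposition in this basis coming from the Weierstrass $\zeta$-function (in the spirit of the Bannai--Kobayashi formalism referenced in the acknowledgement), and I would compute this expansion explicitly in order to pin down the coefficient of $E_{g \mid V_{p}}$.

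The main obstacle is the final valuation computation: showing that this coefficient has $p$-adic valuation exactly $0$. It will hinge on (i) the rationality and $p$-integrality of the Eisenstein number $\mathbb{S}(\Lambda)$, provided by CM together with the good-reduction assumption on the Weierstrass model \eqref{daen shiki}; (ii) the fact that in the supersingular case the ``second-kind'' direction in $H_{\text{dR}}^{1}(\mathcal{G}) \otimes \mathbb{Q}_{p}$ is not $p$-divisible inside the de Rham lattice --- this is precisely where the inert condition (height $2$) is essential, by contrast with the ordinary split case in which the analogous coefficient vanishes; and (iii) consistent normalization between the complex period lattice $\Lambda$ and the $p$-adic Coleman integral attached to $\mathcal{G}$. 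The hypothesis $p \geq 5$ together with good reduction ensures the Weierstrass model is $p$-integral and the classical formal-group machinery applies without further technicalities.
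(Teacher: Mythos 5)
There is a fundamental mismatch here: the statement you were asked to prove is Theorem \ref{Thm alg of cof}, namely that when $g$ has complex multiplication and $F \in H_{2-k}(\Gamma_0(N))$ is good for $g$, \emph{all} Fourier coefficients of $F^{+}$ lie in the Hecke field $F_{g}$ (not merely the principal part, which is algebraic by the definition of goodness). Your proposal does not address this at all; instead it outlines a proof of the paper's main theorem, that $C_{g}\alpha_{g}$ is a $p$-adic unit for an inert prime $p$. Nothing in your argument produces the algebraicity of the coefficients $a^{+}(n)$: the Honda-theory and crystalline-cohomology machinery, the identification $\alpha_{g}=\mu_{p}$, and the comparison with the Weierstrass mock modular form $\mathcal{N}^{+}$ all concern $p$-adic valuations and integrality of quantities that are already assumed to live in $\overline{\mathbb{Q}}\subset\overline{\mathbb{Q}}_{p}$.

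Worse, the proposal is circular relative to the actual statement. The very first step — regarding $F^{+}$ as an element of $\mathbb{C}_{p}(\!(q)\!)$, speaking of the $p$-adic boundedness of its coefficients, and invoking the decomposition $F^{+}-\lambda_{p}E_{g}-\mu_{p}E_{g\mid V_{p}}\in\mathbb{Z}_{p}(\!(q)\!)\otimes\mathbb{Q}_{p}$ — only makes sense once one knows the $a^{+}(n)$ are algebraic and can be embedded via $\iota\colon\overline{\mathbb{Q}}\rightarrow\overline{\mathbb{Q}}_{p}$. That is precisely the content of Theorem \ref{Thm alg of cof}. In the paper this statement is not proved but imported from the literature (Corollary 1.2 of the cited work of Ehlen et al.); a genuine proof would require the theta-lift or differential-operator techniques of that reference, e.g.\ relating $F^{+}$ to CM values of meromorphic modular functions, which is an entirely different toolkit from the formal-group argument you describe.
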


\section{The $p$-adic properties of mock modular forms}

In this section, we recall the $p$-adic properties of mock modular forms whose shadow $g$ has complex multiplication by an imaginary quadratic field $K$. We refer to \cite{bringmann2012mock} \cite{candelori2017geometric} \cite{guerzhoy2014zagier}  \cite{guerzhoy2010p} as main references.

 From now on, we fix an algebraic closure $\overline{\mathbb{Q}}_{p}$ and embedding $\iota \colon \overline{\mathbb{Q}}\rightarrow \overline{\mathbb{Q}}_{p}$. We denote the $p$-adic completion by $\mathbb{C}_{p}$ and normalize the $p$-adic valuation $v_{p}$ such that $v_{p}(p)=1$.

For two formal power series 
\begin{align*}
A(q)=\sum_{n\geq -t} a(n)q^{n}, \quad B(q)=\sum_{n\geq -t} b(n)q^{n} \in \mathbb{C}_{p}(\!(q)\!),
\end{align*}
we denote $A(q) \equiv B(q) \text{ (mod }p^{m})$ when 
\begin{align*}
\min_{n \geq -t} v_{p}(a(n)-b(n)) \geq m.
\end{align*}
If $H(q)=\sum_{n\geq -t} a(n)q^{n} \in \mathbb{C}_{p}(\!(q)\!)$ satisfies under condition, then we say that $H(q)$ is a $p$-adic modular form of weight $k$ and level $N$.
For every $m \in \mathbb{N}$, there exists 
\begin{align*}
H_{m}(q)=\sum_{n\geq -t} a_{m}(n)q^{n} \in M_{k}^{!}( \Gamma _{0}(N), \bar{\mathbb{Q}})
\end{align*}
such that
\begin{align*}
H(q) \equiv H_{m}(q) \quad(\text{mod }p^{m})
\end{align*}
holds.
We define three operators $U_{p}, V_{p}$ and $T_{k, p}$ on $\mathbb{C}_{p}(\!(q)\!)$ by
\begin{align*}
& U_{p}\left(\sum_{n \in \mathbb{Z}} a(n)q^{n}\right):=\sum_{n \in \mathbb{Z}} a(pn)q^{n}
\\
& V_{p}\left(\sum_{n \in \mathbb{Z}} a(n)q^{n}\right):=\sum_{n \in \mathbb{Z}} a(n/p)q^{n}
\\
& T_{k, p}:=U_{p}+p^{k-1}V_{p}.
\end{align*}

Let $g \in S_{k}(\Gamma_{0}(N), \mathbb{Z})$ be a normalized newform with complex multiplication by an imaginary quadratic field $K$ and $F \in H_{2-k}(\Gamma_{0}(N))$ be good for $g$.  We define the Eichler integral of $g$ and $g \mid V_{p}$ by
\begin{align*}
E_{g}(q) :=\sum _{n>0}n^{1-k}a_{g}(n) q^{n}
\\
E_{g \mid V_{p}}(q) :=\sum _{n>0}(pn)^{1-k}a_{g}(n) q^{pn}.
\end{align*}
where $a_{g}(n)$ is the $n$-th coefficient of $g$.
For $\gamma \in \mathbb{C}_{p}$, we define the formal power series
\begin{align*}
\widetilde{\mathcal{F}}_{\gamma}:=F^{+}-\gamma E_{g|V_{p}}.
\end{align*}
Let $\beta, \beta'$ be the roots of $X^{2}-a_{g}(p)X+p^{k-1}$ satisfying $v_{p}( \beta ) \leq v_{p}( \beta ') $. 
We denote the $n$-th Fourier coefficients of $D^{k-1}(F)$ by $a_{D^{k-1}(F)}(n)$.

\begin{lem}[{\cite[Proposition 2.3]{guerzhoy2010p}}]\label{lemma a}
If $p$ is inert in $\mathcal{O}_{K}$ and $p \nmid N$, then 
\begin{align*}
\lim _{m\rightarrow \infty }\dfrac{a_{D^{k-1}(F)}( p^{2m+1}) }{\beta ^{2m}}
\end{align*}
is convergent.

\end{lem}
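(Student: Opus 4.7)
The plan is to analyze $f := D^{k-1}(F) \in M_k^!(\Gamma_0(N))$, whose Fourier coefficients are $a_f(n) = n^{k-1} a^+(n)$. Since $g$ has CM by $K$ and $p$ is inert in $\mathcal{O}_K$, we have $a_g(p) = 0$, so $\beta^2 = -p^{k-1}$ and $\beta^{2m} = (-p^{k-1})^m$. The claim is thus equivalent to the $p$-adic convergence of $a_f(p^{2m+1})/(-p^{k-1})^m$.

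First I would show that $T_{k,p}(f)$ is especially clean. A direct computation with $U_p$ and $V_p$ on $q$-expansions yields
\[
T_{k,p}\bigl(D^{k-1}(F)\bigr) \;=\; p^{k-1} D^{k-1}\bigl(F \mid T_{2-k,p}\bigr).
\]
Moreover, $\xi_{2-k}$ intertwines Hecke operators, and $\xi_{2-k}(F)$ is a nonzero multiple of $g$; since $a_g(p) = 0$, one obtains $\xi_{2-k}(F \mid T_{2-k,p}) = 0$, so by \Cref{xi} the form $h := F \mid T_{2-k,p}$ lies in $M_{2-k}^!(\Gamma_0(N))$. Thus $S := T_{k,p}(f) = p^{k-1} D^{k-1}(h)$ is itself weakly holomorphic. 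Comparing the $p^{2m}$-th Fourier coefficients in $T_{k,p}(f) = U_p f + p^{k-1} V_p f$ against $S$ produces the two-term recurrence
\[
a_f(p^{2m+1}) = -p^{k-1} a_f(p^{2m-1}) + a_S(p^{2m}),
\]
which, after iterating and dividing by $(-p^{k-1})^m$, telescopes to
\[
\frac{a_f(p^{2m+1})}{(-p^{k-1})^m} = a_f(p) + \sum_{l=1}^{m} \frac{a_S(p^{2l})}{(-p^{k-1})^l}.
\]

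Using the explicit identity $a_S(p^{2l}) = p^{(2l+1)(k-1)} a_h(p^{2l})$, the $l$-th term of the series equals $(-1)^l p^{(l+1)(k-1)} a_h(p^{2l})$, whose $p$-adic valuation is $(l+1)(k-1) + v_p(a_h(p^{2l}))$. The series is therefore $p$-adically Cauchy as soon as $v_p(a_h(p^{2l}))$ is bounded below uniformly in $l$, which yields the desired limit.

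The main obstacle is precisely establishing this uniform lower bound on $v_p(a_h(p^{2l}))$. I expect it to follow from the algebraicity of the Fourier coefficients of $F^+$ (\Cref{Thm alg of cof}), which transfers through $T_{2-k,p}$ to $h$, together with the hypothesis $p \nmid N$, which ensures $T_{2-k,p}$ preserves a natural $p$-integral structure on $M_{2-k}^!(\Gamma_0(N), \overline{\mathbb{Q}}_p)$. Once a uniform bounded $p$-adic denominator for the coefficients of $h$ is secured, each term of the series has valuation $\geq (l+1)(k-1) - C$ for a fixed constant $C$, the series converges, and convergence of $a_{D^{k-1}(F)}(p^{2m+1})/\beta^{2m}$ follows.
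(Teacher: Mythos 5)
Your argument is correct, and it is essentially the proof of the cited result (the paper itself only quotes this lemma from \cite{guerzhoy2010p} without reproving it): one shows $h:=F\mid T_{2-k,p}$ is weakly holomorphic because $a_g(p)=0$ kills its shadow, converts $T_{k,p}(D^{k-1}F)=p^{k-1}D^{k-1}(h)$ into the two-term recurrence, and telescopes. The one step you leave as an expectation --- the uniform lower bound on $v_{p}(a_{h}(p^{2l}))$ --- is exactly Lemma~\ref{lem coeff bound} applied to the weakly holomorphic form $D^{k-1}(h)\in M_{k}^{!}(\Gamma_{0}(N),F_{g})$, whose coefficients are algebraic by Theorem~\ref{Thm alg of cof}; note, however, that the mechanism is not that $T_{2-k,p}$ preserves a $p$-integral structure (it cannot, because of the $p^{1-k}V_{p}$ term), but simply that $h$ is itself a weakly holomorphic form with coefficients in a fixed number field, and any such form has bounded denominators.
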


Next we calculate the value of $\lim _{m\rightarrow \infty }\dfrac{a_{D^{k-1}(F)}( p^{2m}) }{\beta ^{2m}}$ for an inert prime $p$. 

\begin{lem}[{\cite[Lemma 4.1]{bringmann2012mock}}]\label{lemma b}
Let $\chi_{K}$ be the character associated to $K$. The twist 
\begin{align*}
R := \dfrac{1}{2}\left(F+F\otimes \chi_{K}\right)\otimes \chi_{K}
\end{align*}
is an element of $M_{k}^{!}(\Gamma_{0}(N), \mathbb{Q})$.
\end{lem}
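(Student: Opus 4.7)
The plan is to show that $R$ is a harmonic Maass form whose non-holomorphic part $R^{-}$ vanishes identically; $R$ is then automatically weakly holomorphic. I would begin by recalling that the standard twist-by-Dirichlet-character operation sends harmonic Maass forms of weight $2-k$ to harmonic Maass forms of weight $2-k$ (on a possibly enlarged level), and commutes with $\xi_{2-k}$ up to the character. Because $g$ has complex multiplication by $K$ we have $g \otimes \chi_{K} = g$, so $\xi_{2-k}(F \otimes \chi_{K}) = g/\|g\|^{2}$; in particular $\tfrac{1}{2}(F + F \otimes \chi_{K})$ shares the shadow of $F$, and so $R$ is also a harmonic Maass form.

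The main computation is as follows. Twisting by $\chi_{K}$ multiplies the $n$-th Fourier coefficient by $\chi_{K}(n)$, so both the holomorphic and non-holomorphic coefficients of $R$ at index $n$ take the form $\tfrac{1}{2}\chi_{K}(n)(1 + \chi_{K}(n)) a^{\pm}(n)$, which equals $a^{\pm}(n)$ when $\chi_{K}(n) = 1$ and vanishes when $\chi_{K}(n) \in \{0, -1\}$. To conclude $R^{-} = 0$ it therefore suffices to show $a^{-}(n) = 0$ for every $n < 0$ with $\chi_{K}(n) = 1$. The relation $\xi_{2-k}(F) = g/\|g\|^{2}$ expresses each $a^{-}(n)$ for $n < 0$ as an explicit nonzero scalar multiple of $\overline{a_{g}(|n|)}$. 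For $n < 0$ with $\chi_{K}(n) = 1$, the fact that $\chi_{K}(-1) = -1$ (because $K$ is imaginary quadratic) gives $\chi_{K}(|n|) = \chi_{K}(-n) = \chi_{K}(-1)\chi_{K}(n) = -1$. The CM property of $g$, which forces $a_{g}(m) = 0$ whenever $\chi_{K}(m) = -1$, then yields $a_{g}(|n|) = 0$, whence $a^{-}(n) = 0$, as required.

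Rationality of the Fourier coefficients follows from \cref{Thm alg of cof}: since $g \in S_{k}(\Gamma_{0}(N), \mathbb{Z})$ the Hecke field $F_{g}$ equals $\mathbb{Q}$, so every $a^{+}(n) \in \mathbb{Q}$, and this rationality is preserved under the double twist by the real $\{0, \pm 1\}$-valued character $\chi_{K}$. The main obstacle I anticipate is not the vanishing argument above (which is a short Fourier-coefficient calculation) but the level bookkeeping: the naive twist $F \otimes \chi_{K}$ lives on a congruence group of the form $\Gamma_{0}(\operatorname{lcm}(N, |D|^{2}))$, and one must combine the double twist with the CM identity $g \otimes \chi_{K} = g$ to see that $R$ descends to the stated level and to confirm the precise weight claim.
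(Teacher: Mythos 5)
Your argument is correct and coincides with the proof in the cited source: the paper itself gives no proof of this lemma, deferring entirely to \cite[Lemma 4.1]{bringmann2012mock}, and the mechanism there is exactly yours --- the double twist retains precisely the coefficients with $\chi_{K}(n)=1$, while the non-holomorphic coefficients of $F$ are supported on $n<0$ with $\chi_{K}(n)\in\{0,-1\}$ because the shadow $g$ has CM and $\chi_{K}(-1)=-1$, so $R^{-}=0$ and $R$ is weakly holomorphic with rational coefficients by Theorem \ref{Thm alg of cof}. The two caveats you flag (the level after twisting, and the weight, which for a form built from $F\in H_{2-k}(\Gamma_{0}(N))$ should be $2-k$ rather than $k$ as printed) are genuine but immaterial to the only use made of the lemma, namely the bounded-denominator estimate in Proposition \ref{prop. lim=0}, which is insensitive to both weight and level.
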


\begin{remark}
The Fourier coefficient $a_{R}(n)$ of $R$ is equal to $0$ when $\chi_{K}(n) \not= 1$. In particular, $a_{R}(n)=0$ when $\chi_{K}(n)=0$.
\end{remark}

\begin{lem}[{\cite[Proposition 2.1]{guerzhoy2010p}}]\label{lem coeff bound}
If $R=\sum a_{n}q^{n}$ is an element of $M_{k}^{!}(\Gamma_{0}(N), \mathbb{Q})$, then we have that
\begin{align*}
\inf_{n \in \mathbb{N}}v_{p}(a_{n}) >-\infty.
\end{align*}
\end{lem}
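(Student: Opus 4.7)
The plan is to clear the poles of $R$ by multiplying with a high power of the modular discriminant, thereby converting $R$ into a holomorphic modular form with rational coefficients, invoke the integral structure of holomorphic modular forms, and recover the bound on $R$ itself by formal Laurent-series division.

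First I would choose $t \geq 1$ large enough that $\Delta^{t} R$ is holomorphic at every cusp of $\Gamma_{0}(N)$. Such a $t$ exists because $R$ has only finitely many poles, each at a cusp and of finite order, while the discriminant $\Delta(\tau) = q \prod_{n \geq 1}(1-q^{n})^{24} \in S_{12}(\SL_{2}(\Z), \Z)$ is modular for all of $\SL_{2}(\Z)$ and therefore vanishes at every cusp of $\Gamma_{0}(N)$. Moreover, its $q$-expansion at $\infty$ is $\Delta^{t} = q^{t}(1 + \sum_{i \geq 1} c_{i} q^{i})$ with $c_{i} \in \Z$ and leading coefficient $1$, which is in particular a $p$-adic unit. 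Consequently $\Delta^{t} R \in M_{k+12t}(\Gamma_{0}(N), \Q)$.

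Next I would appeal to the classical integral-structure theorem for holomorphic modular forms, namely $M_{l}(\Gamma_{0}(N), \Q) = M_{l}(\Gamma_{0}(N), \Z) \otimes_{\Z} \Q$ (via the $q$-expansion principle of Deligne--Rapoport, or equivalently via Shimura's rationality theorems). Applied to $\Delta^{t} R$, this produces a positive integer $M$, depending only on $R$, such that every Fourier coefficient of $\Delta^{t} R$ lies in $M^{-1}\Z$. In particular the $p$-adic valuations of the Fourier coefficients of $\Delta^{t} R$ are bounded below by $-v_{p}(M)$.

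Finally I would recover $R$ as the Laurent-series product $R = (\Delta^{t} R) \cdot \Delta^{-t}$. Since the $q$-expansion of $\Delta^{t}$ begins $q^{t}(1 + O(q))$ with integer coefficients, its formal inverse $\Delta^{-t}$ lies in $q^{-t}\Z[\![q]\!]$, so in particular all its coefficients lie in $\Z_{p}$. For two formal Laurent series $A(q), B(q)$ with coefficients of $p$-adic valuation $\geq \alpha$ and $\geq \beta$ respectively, each coefficient of $A(q)B(q)$ is a finite convolution sum, hence has $p$-adic valuation $\geq \alpha + \beta$; applying this to the product above yields $\inf_{n} v_{p}(a_{n}) \geq -v_{p}(M) > -\infty$, as desired. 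The only substantive ingredient is the integral-structure theorem for $M_{l}(\Gamma_{0}(N), \Q)$, which is classical; given it, the rest is a routine $q$-expansion argument, with the only care being to pick $t$ large enough to dominate the pole orders at every cusp of $\Gamma_{0}(N)$, which the simultaneous vanishing of $\Delta$ at all such cusps makes possible.
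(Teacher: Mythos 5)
Your proposal is correct. The paper offers no proof of this lemma, simply citing Guerzhoy's Proposition 2.1, and your argument --- clear the cuspidal poles by multiplying with a suitable power of $\Delta$, invoke the bounded-denominators (integral structure) theorem for holomorphic forms on the congruence group $\Gamma_{0}(N)$, and divide back by $\Delta^{t}$, whose $q$-expansion is $q^{t}$ times a unit of $\mathbb{Z}[\![q]\!]$ --- is precisely the standard proof of that cited result.
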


\begin{proposition}[\cite{bringmann2012mock}]\label{prop. lim=0}
If $p$ is inert in $\mathcal{O}_{K}$ and $p \nmid N$, then we have that 
\begin{align*}
\lim _{m\rightarrow \infty }\dfrac{a_{D^{k-1}(F)}( p^{2m}) }{\beta ^{2m}}=0.
\end{align*} 
\end{proposition}

\begin{proof}
Since $\chi_{K}(p^{2m})=1$ holds for all $m \in \mathbb{N}$, $a^{+}(p^{2m})$ is the $p^{2m}$-th coefficient of $\dfrac{1}{2}\left(F+F\otimes \chi_{K}\right)\otimes \chi_{K}$. From Lemma \ref{lemma b} and Lemma \ref{lem coeff bound}, we have that
\begin{align*}
v_{p}(a_{D^{k-1}(F)}(p^{2m})) \geq 2m(k-1)-A
\end{align*}
for some $A$.
We note that $v_{p}(\beta)=\dfrac{k-1}{2}$ because $\beta$ is a root of $X^{2}+p^{k-1}$.
Hence we conclude that $\lim _{m\rightarrow \infty }\dfrac{a_{D^{k-1}(F)}( p^{2m}) }{\beta ^{2m}}=0$.
\end{proof}

\begin{thm}[{\cite[Theorem 1.3]{bringmann2012mock}}]\label{def alpha_g}
Suppose that $p \nmid N$ and  $p$ is inert in $\mathcal{O}_{K}$.
Then there exists a unique $p$-adic constant $\alpha_{g} \in \mathbb{C}_{p}$ such that $\widetilde{\mathcal{F}}_{\alpha_{g}}$ is a $p$-adic modular form of weight $2-k$ and level $pN$. Furthermore, $\alpha_{g}$ is given by the $p$-adic limit 
\begin{align*}
\alpha_{g}=\displaystyle\lim _{m\rightarrow \infty }\dfrac{a_{D^{k-1}(F)}(p^{2m+1}) }{\beta ^{2m}}.
\end{align*}
\end{thm}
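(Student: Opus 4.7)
The plan is to take the claimed formula as the definition of $\alpha_g$, check uniqueness by a direct valuation analysis of the Fourier coefficients of $E_{g|V_p}$, and verify that with this choice $\widetilde{\mathcal{F}}_{\alpha_g}$ is indeed a $p$-adic modular form by controlling its Fourier coefficients index by index and producing suitable classical approximations in $M_{2-k}^!(\Gamma_0(pN))$.

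First I would set $\alpha_g := \lim_{m \to \infty} a_{D^{k-1}(F)}(p^{2m+1})/\beta^{2m}$, whose convergence is exactly Lemma \ref{lemma a}. For uniqueness, if some other $\gamma' \in \mathbb{C}_p$ also made $\widetilde{\mathcal{F}}_{\gamma'}$ a $p$-adic modular form, then $(\gamma' - \alpha_g) E_{g|V_p}$ would be a $p$-adic modular form too, hence by Serre's definition have $p$-adically bounded-below Fourier coefficients. The $q^{p^{2m+1}}$-coefficient of $E_{g|V_p}$ is $(p^{2m+1})^{1-k} a_g(p^{2m})$; the Hecke recursion (starting from $a_g(p) = 0$ since $p$ is inert) gives $a_g(p^{2m}) = (-p^{k-1})^m = \beta^{2m}$, so this coefficient has $p$-adic valuation $-(m+1)(k-1)$, unbounded below, which forces $\gamma' = \alpha_g$.

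For existence, I would verify that the Fourier coefficients of $\widetilde{\mathcal{F}}_{\alpha_g}$ are $p$-adically bounded below and produce approximating weakly holomorphic forms. The analysis splits according to the $p$-divisibility of the index. At indices coprime to $p$ the coefficient is $a^+(n) = a_{D^{k-1}(F)}(n)/n^{k-1}$, bounded by applying Lemma \ref{lem coeff bound} to the classical weakly holomorphic form $D^{k-1}(F)$. At indices $p^{2m}$ there is no correction from $\alpha_g E_{g|V_p}$ (since $a_g(p^{2m-1}) = 0$), and the estimate $v_p(a_{D^{k-1}(F)}(p^{2m})) \geq 2m(k-1) - A$ proved inside Proposition \ref{prop. lim=0} gives $v_p(a^+(p^{2m})) \geq -A$. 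At indices $p^{2m+1}$ the coefficient simplifies to $\beta^{2m} p^{-(2m+1)(k-1)}(a_{D^{k-1}(F)}(p^{2m+1})/\beta^{2m} - \alpha_g)$, whose $p$-adic boundedness depends on the rate of convergence of the defining limit. Mixed indices $n = p^j n'$ with $(n', p) = 1$ reduce to these cases by Hecke multiplicativity. Once boundedness is in hand, I would construct approximating classical forms in $M_{2-k}^!(\Gamma_0(pN), \overline{\mathbb{Q}})$ using the surjectivity of $\xi_{2-k}$ (Theorem \ref{xi}) to produce harmonic Maass forms matching $F$ modulo arbitrary $p$-power precision and extract their weakly holomorphic defects.

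The hard part will be the $p^{2m+1}$ case: the mere existence of the limit in Lemma \ref{lemma a} is not enough, as one needs the quantitative rate $v_p(a_{D^{k-1}(F)}(p^{2m+1})/\beta^{2m} - \alpha_g) \geq (m+1)(k-1) - O(1)$ to counterbalance the denominator $p^{(2m+1)(k-1)}$. To extract this rate I would return to the proof of Lemma \ref{lemma a} via \cite[Proposition 2.3]{guerzhoy2010p} and trace the telescoping Hecke-type identity behind it — essentially iterating $a_g(p^{j+1}) = -p^{k-1} a_g(p^{j-1})$ against the Fourier coefficients of $D^{k-1}(F)$ so that successive differences in the approximating sequence are divisible by uniformly increasing powers of $p$. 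With this refinement the boundedness argument closes, and the approximating sequence of classical forms then yields $\widetilde{\mathcal{F}}_{\alpha_g}$ as a bona fide $p$-adic modular form of weight $2-k$ and level $pN$.
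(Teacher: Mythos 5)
This theorem is not proved in the paper at all: it is imported verbatim from Bringmann--Guerzhoy--Kane \cite[Theorem 1.3]{bringmann2012mock}, so there is no internal proof to compare against, and your attempt has to be judged against the source. Much of your skeleton is sound. Uniqueness via the unboundedness of the coefficients of $E_{g\mid V_{p}}$ (valuation $-(m+1)(k-1)$ at $q^{p^{2m+1}}$, using $a_{g}(p^{2m})=(-p^{k-1})^{m}=\beta^{2m}$) is correct and is essentially the argument in the source. You also correctly isolate the quantitative issue at indices $p^{2m+1}$: mere convergence of the limit is not enough, and the rate $v_{p}\bigl(a_{D^{k-1}(F)}(p^{2m+1})/\beta^{2m}-\alpha_{g}\bigr)\geq (m+1)(k-1)-O(1)$ is exactly what is needed. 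That rate does follow from the telescoping identity you gesture at: since $F\mid T_{2-k,p}=R_{p}\in M_{2-k}^{!}(\Gamma_{0}(N),\mathbb{Q})$ (as recalled in the proof of Theorem \ref{mu=alpha}), one gets $a_{D^{k-1}(F)}(p^{2m+1})+p^{k-1}a_{D^{k-1}(F)}(p^{2m-1})=p^{k-1}(p^{2m})^{k-1}a_{R_{p}}(p^{2m})$, and Lemma \ref{lem coeff bound} applied to $R_{p}$ gives successive differences of valuation $\geq(m+1)(k-1)-A$. (Your reduction of mixed indices $p^{j}n'$ "by Hecke multiplicativity" is a slight misstatement --- $a^{+}(n)$ is not multiplicative --- but the same recursion with the bounded input $a_{R_{p}}(p^{i}n')$ handles it.)

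The genuine gap is the last step. Having $p$-adically bounded-below coefficients is strictly weaker than being a $p$-adic modular form in the sense used here, which requires, for every $m$, a classical weakly holomorphic form congruent to $\widetilde{\mathcal{F}}_{\alpha_{g}}$ modulo $p^{m}$. Your proposed mechanism --- using the surjectivity of $\xi_{2-k}$ (Theorem \ref{xi}) to "produce harmonic Maass forms matching $F$ modulo arbitrary $p$-power precision" --- does not work: surjectivity of $\xi_{2-k}$ lets you prescribe the \emph{shadow} of a harmonic Maass form, which gives no control whatsoever over $p$-adic congruences between holomorphic parts; and the defect of a harmonic Maass form is in any case not a weakly holomorphic form you can extract. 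The approximants in the actual proof come from the same Hecke identity already used for the rate estimate: iterating $F^{+}\mid U_{p}=R_{p}-p^{1-k}F^{+}\mid V_{p}$ expresses $\widetilde{\mathcal{F}}_{\alpha_{g}}$ modulo increasing powers of $p$ as explicit $\mathbb{Q}_{p}$-linear combinations of $R_{p}\mid U_{p}^{i}V_{p}^{j}$, which are classical of level $pN$; the limit formula for $\alpha_{g}$ and the $p$-adic modularity drop out of this single computation simultaneously. Without supplying that construction (or an equivalent one), your argument establishes boundedness of $\widetilde{\mathcal{F}}_{\alpha_{g}}$ but not the theorem as stated.
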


In addition, L. Candelori and F. Castella \cite{candelori2017geometric} showed that $\widetilde{\mathcal{F}}_{\alpha_{g}}$ is an overconvergent modular form by using geometric methods.

\section{Formal group associated to an elliptic curve}
In this section, we review the formal group associated to an elliptic curve. We refer to \cite{bannai2017radius} \cite{kobayashi2003iwasawa} as main references.
Let $g \in S_{2}( \Gamma _{0}( N), \mathbb{Z} )$ be a normalized newform with complex multiplication by an imaginary quadratic field $K$.
We define the period lattice $\Lambda$ of $g$ by
\begin{align*}
\Lambda:=\left\{ \int ^{\gamma(\infty)}_{\infty}2\pi ig(\tau) d\tau \mid \gamma \in \Gamma _{0}(N) \right\}.
\end{align*}
Let $g_{2}, g_{3}$ be rational numbers associated to $\Lambda$ defined by
\begin{align*}
g_{2}(\Lambda):=60\sum _{\lambda \in \Lambda \backslash \left\{ 0\right\} }\dfrac{1}{\lambda^{4}}, \quad g_{3}(\Lambda):=140\sum _{\lambda \in \Lambda \backslash \left\{ 0\right\} }\dfrac{1}{\lambda ^{6}}.
\end{align*}
 Then the elliptic curve $E(\mathbb{C}) := \mathbb{C}/\Lambda$ has a Weierstrass model defined over $\mathbb{Q}$
\begin{align}\label{shiki}
E : y^{2}=4x^{3}-g_{2}(\Lambda)x-g_{3}(\Lambda).
\end{align} 
It is known that the Weierstrass model \eqref{shiki} is defined over $\mathbb{Z}_{p}$ for a prime number $p \geq 5$. (cf.  \cite[Section 2.14]{cremona1997algorithms}.)
Let $\widehat{E}$ be the formal group associated to this Weierstrass model \eqref{shiki} with parameter $t=-2x/y$.
Let $p$ be inert in $\mathcal{O}_{K}$ and $p \geq 5$. We assume that the Weierstrass model \eqref{shiki} has good reduction at $p$.

For a one-dimensional formal group $\mathcal{F}$ defined over $\mathbb{Z}_{p}$, we define the space $H^{1}_{\text{cris}}(\widehat{E}/\mathbb{Z}_{p})$ following the approach in \cite{bannai2017radius}.
We denote the addition of $\mathcal{F}$ by $\oplus$ and define two spaces $\mathcal{Z}^{1}(\mathcal{F})$, $\mathcal{B}^{1}(\mathcal{F})$ by
\begin{align*}
&\mathcal{Z}^{1}(\mathcal{F}):=\{ \omega \in \widehat{\Omega}^{1}_{\mathbb{Z}_{p}[\![X]\!]/\mathbb{Z}_{p}} \mid F_{\omega}(X \oplus Y)-F_{\omega}(X)-F_{\omega}(Y) \in \mathbb{Z}_{p}[\![X]\!] \}
\\
&\mathcal{B}^{1}(\mathcal{F}):=\{df \in \widehat{\Omega}^{1}_{\mathbb{Z}_{p}[\![X]\!]/\mathbb{Z}_{p}} \mid f(X) \in \mathbb{Z}_{p}[\![X]\!] \}
\end{align*}
where $F_{\omega}$ is the formal primitive function of $\omega$. We say that $\omega \in \widehat{\Omega}^{1}_{\mathbb{Z}_{p}[\![X]\!]/\mathbb{Z}_{p}}$ is a differential form of the second kind if $\omega \in \mathcal{Z}^{1}(\mathcal{F})$. We define the space $H^{1}_{\text{cris}}(\mathcal{F}/\mathbb{Z}_{p})$ by
\begin{align*}
H^{1}_{\text{cris}}(\mathcal{F}/\mathbb{Z}_{p}):=\mathcal{Z}^{1}(\mathcal{F})/\mathcal{B}^{1}(\mathcal{F}).
\end{align*}
Let $\varphi$ be an endomorphism of $\mathbb{Q}_{p}[\![X]\!]$ defined by $f(X) \mapsto f(X^{p})$. It is known that $\varphi$ acts on $H^{1}_{\text{cris}}(\mathcal{F}/\mathbb{Z}_{p})$ by $\varphi(\sum_{i}f_{i}dg_{i})=\sum_{i}\varphi(f_{i})d\varphi(g_{i})$. (cf. \cite[Lemma 2.1]{bannai2017radius}.)

\begin{proposition}[{\cite[Proposition 2.3]{bannai2017radius}}]\label{prop. cris basis}
Let $\omega$ be an invariant differential form and $h$ be the height of the formal group $\mathcal{F}$. We put $\omega^{\ast}:=p^{-1}\varphi(\widehat{\omega})$.
Then $H^{1}_{\text{cris}}(\widehat{E}/\mathbb{Z}_{p})$ is a free $\mathbb{Z}_{p}$-module of rank $h$ with basis $\varphi^{i}\omega^{\ast} (0 \leq i \leq h-1)$. 
\end{proposition}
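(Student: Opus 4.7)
The plan is to use Honda's theory of commutative formal groups over $\mathbb{Z}_p$. Since $\mathcal{F}$ has height $h$, its logarithm satisfies a Honda functional equation
\[
p\,\hat\omega - \sum_{i \geq 1} b_i \,\varphi^i(\hat\omega) \in \mathbb{Z}_p[\![X]\!]
\]
with $b_i \in p\mathbb{Z}_p$ for $i < h$ and $b_h \in \mathbb{Z}_p^{\times}$. After rewriting $\varphi^i(\hat\omega) = p\,\varphi^{i-1}\omega^\ast$, this becomes, in $H^1_{\text{cris}}(\mathcal{F}/\mathbb{Z}_p)$, a degree-$h$ relation (monic after multiplying by $b_h^{-1}$) expressing $\varphi^h \omega^\ast$ modulo $\mathcal{B}^1(\mathcal{F})$ in terms of $\omega^\ast, \varphi\omega^\ast, \dots, \varphi^{h-1}\omega^\ast$. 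This single relation will simultaneously bound the rank from above and, via sharpness, give linear independence.

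First I would verify that $\omega^\ast = p^{-1}\varphi(\hat\omega)$ defines a class in $H^1_{\text{cris}}(\mathcal{F}/\mathbb{Z}_p)$, i.e.\ is a differential of the second kind. This reduces to showing
\[
p^{-1}\bigl(\hat\omega((X\oplus Y)^p) - \hat\omega(X^p) - \hat\omega(Y^p)\bigr) \in \mathbb{Z}_p[\![X, Y]\!].
\]
Using $(X\oplus Y)^p \equiv X^p \oplus Y^p \pmod{p\,\mathbb{Z}_p[\![X, Y]\!]}$ (Frobenius and the formal group law commute mod $p$) and the strict additivity $\hat\omega(A\oplus B) = \hat\omega(A) + \hat\omega(B)$, the bracket becomes $\hat\omega(X^p \oplus Y^p + pH) - \hat\omega(X^p \oplus Y^p)$ for some $H \in \mathbb{Z}_p[\![X, Y]\!]$; Taylor expansion together with the integrality of $\hat\omega'(T) = \omega/dT \in \mathbb{Z}_p[\![T]\!]$ absorbs the factor $p^{-1}$. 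The same argument, iterated, handles $\varphi^i\omega^\ast$ for $1 \leq i \leq h-1$.

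For linear independence, suppose $\sum_{i=0}^{h-1} c_i \,\varphi^i\omega^\ast = df$ with $c_i \in \mathbb{Z}_p$, $f \in \mathbb{Z}_p[\![X]\!]$. Scale out the common $p$-power among the $c_i$ and reduce modulo $p$: a nontrivial relation would contradict the sharpness of the Honda equation, i.e.\ the statement that no nonzero $\mathbb{Z}_p$-combination of $\hat\omega, \varphi\hat\omega, \dots, \varphi^{h-1}\hat\omega$ of $p$-adic weight less than that of $\hat\omega$ itself lies in $\mathbb{Z}_p[\![X]\!]$. For spanning, take $\eta = dg \in \mathcal{Z}^1(\mathcal{F})$ with $g \in \mathbb{Q}_p[\![X]\!]$; the additivity defect $g(X\oplus Y) - g(X) - g(Y) \in \mathbb{Z}_p[\![X,Y]\!]$ forces a decomposition of $g$ modulo $\mathbb{Z}_p[\![X]\!]$ as a $\mathbb{Z}_p$-combination of the $\varphi^i(\hat\omega)/p$, and the Honda relation then truncates the tail to $i \leq h-1$.

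The principal obstacle is the spanning step: a priori, an arbitrary differential of the second kind produces an infinite tail of $\varphi^i(\hat\omega)/p$ contributions, and one must show that only the first $h$ suffice. This is precisely where the height assumption enters non-trivially, through the sharp leading behaviour of the Honda equation. The cleanest conceptual formulation is to identify $H^1_{\text{cris}}(\mathcal{F}/\mathbb{Z}_p)$ with the covariant Dieudonn\'e module of $\mathcal{F}$, which is classically known to be free of rank $h$ over $\mathbb{Z}_p$ with a cyclic $\varphi$-generator arising from the invariant differential $\omega$; the work is then in matching the explicit representatives $\varphi^i\omega^\ast$ with this abstract basis.
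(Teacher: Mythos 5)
The paper offers no proof of this proposition: it is quoted directly as Proposition 2.3 of \cite{bannai2017radius}, so there is no internal argument to measure yours against. Your Honda-theoretic outline is the natural route and, as far as I can tell, parallels the one in the cited source. The portions you actually execute are essentially correct: $\omega^{\ast}$ has primitive $p^{-1}\lambda(X^{p})$ (where $\lambda$ is the logarithm), and the congruence $(X\oplus Y)^{p}\equiv X^{p}\oplus Y^{p}\pmod{p}$ plus a Taylor expansion shows it lies in $\mathcal{Z}^{1}(\mathcal{F})$ --- though note you need every divided term $p^{k}\lambda^{(k)}/k!$, not just $\lambda'$, to contribute a factor of $p$; this works because $\binom{n}{k}a_{n}=\tfrac{1}{k}\binom{n-1}{k-1}(na_{n})$ and $v_{p}(p^{k}/k)\geq 1$. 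Also, the functional equation should land in $p\mathbb{Z}_{p}[\![X]\!]$, i.e.\ $p\lambda-\sum b_{i}\varphi^{i}\lambda\in p\mathbb{Z}_{p}[\![X]\!]$; with target only $\mathbb{Z}_{p}[\![X]\!]$ the relation you obtain in $H^{1}_{\text{cris}}$ is $p\cdot\bigl(\widehat{\omega}-\sum b_{i}\varphi^{i-1}\omega^{\ast}\bigr)=0$, and you may not divide by $p$ before knowing the module is torsion-free, which is part of the conclusion.

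The genuine gap is that both halves of the basis claim are delegated to statements you neither formulate precisely nor prove. For linear independence, the fact you need is: if $c_{0},\dots,c_{h-1}\in\mathbb{Z}_{p}$ and $\sum_{i}c_{i}\varphi^{i+1}\lambda\in p\mathbb{Z}_{p}[\![X]\!]$, then all $c_{i}\in p\mathbb{Z}_{p}$. This amounts to the statement that the ideal of types $\{v : v(\varphi)\lambda\equiv 0\bmod p\}$ is generated by the degree-$h$ Eisenstein type $u$, together with a degree and valuation argument ruling out $Xv_{0}\in(u)$ for $v_{0}$ of degree at most $h-1$ having a unit coefficient; invoking ``sharpness of the Honda equation'' is not a proof of this. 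For spanning --- which you rightly identify as the principal obstacle --- the appeal to the classical Dieudonn\'e module being free of rank $h$ is circular here: the identification of $\mathcal{Z}^{1}(\mathcal{F})/\mathcal{B}^{1}(\mathcal{F})$ with that Dieudonn\'e module is essentially the content of the proposition (and is what \cite{bannai2017radius} actually establishes, via Fontaine's description of the Dieudonn\'e module through exactly the additivity-defect condition defining $\mathcal{Z}^{1}$). Concretely one must show that any $dg\in\mathcal{Z}^{1}(\mathcal{F})$ satisfies $g\equiv\sum_{i=0}^{h-1}c_{i}p^{-1}\varphi^{i+1}\lambda\pmod{\mathbb{Z}_{p}[\![X]\!]}$ for some $c_{i}\in\mathbb{Z}_{p}$; your sketch produces at best an infinite $\varphi$-expansion whose truncation to $i\leq h-1$ again requires the generation statement above. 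As it stands the proposal is a sound strategy with its two essential verifications missing.
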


We return to the case $\mathcal{F}=\widehat{E}$. We denote the formal logarithm of $\widehat{E}$ by $\lambda(t)$ such that $\lambda'(0)=1$. We define an invariant differential form $\widehat{\omega}(t)$ and a differential form $\widehat{\eta}(t)$ by
\begin{align*}
\widehat{\omega}(t):=\dfrac{dx}{y}, \quad \widehat{\eta}(t):=x\dfrac{dx}{y}.
\end{align*}
As a formal power series, we have that
\begin{align*}
\widehat{\omega}(t)=dz\mid_{z=\lambda(t)}, \quad \widehat{\eta}(t)=\wp(\Lambda, z)dz\mid_{z=\lambda(t)}.
\end{align*}
\begin{lem}[{\cite[Lemma 3.4]{bannai2017radius}}]\label{eta_0}
 Let $\widehat{\eta}_{0}(t)$ be the differential form given by
 \begin{align*}
 \widehat{\eta}_{0}(t):=\widehat{\eta}(t)-\dfrac{dt}{t^{2}}.
 \end{align*}
 Then the differential form $\widehat{\eta}_{0}(t)$ is of the second kind.
\end{lem}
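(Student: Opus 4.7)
The plan is to verify both conditions in the definition of $\mathcal{Z}^{1}(\widehat{E})$: first, that $\widehat{\eta}_0 \in \widehat{\Omega}^{1}_{\mathbb{Z}_p[\![t]\!]/\mathbb{Z}_p}$ (its Laurent expansion is pole-free and has integral coefficients); second, that its formal primitive $F_{\widehat{\eta}_0}$ satisfies $F_{\widehat{\eta}_0}(X \oplus Y) - F_{\widehat{\eta}_0}(X) - F_{\widehat{\eta}_0}(Y) \in \mathbb{Z}_p[\![X, Y]\!]$.

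For the first point, I would derive the Laurent expansions of $x(t)$ and $y(t)$ from $t = -2x/y$ together with the Weierstrass equation. Writing $x = u/t^2$ and $y = -2u/t^3$ with $u \in \mathbb{Q}_p[\![t]\!]$, substitution yields the fixed-point equation $u^2(u-1) = (g_2/4)\,u\,t^4 + (g_3/4)\,t^6$; since $p \geq 5$ forces $g_2, g_3, 1/4 \in \mathbb{Z}_p$, the iterative solution lies in $\mathbb{Z}_p[\![t]\!]$. Computing the principal part of $\widehat{\eta}(t) = x(t)\,dx(t)/y(t)$ then shows it equals exactly $dt/t^2$, so $\widehat{\eta}_0(t) \in \mathbb{Z}_p[\![t]\!]\,dt$.

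For the second point, I would write the formal primitive explicitly. Using $\zeta'(\Lambda, z) = -\wp(\Lambda, z)$ together with $\widehat{\eta}(t) = \wp(\Lambda, z)\,dz|_{z = \lambda(t)}$, one has
\[
F_{\widehat{\eta}_0}(t) = -\zeta(\Lambda, \lambda(t)) + \tfrac{1}{t},
\]
which is regular at $t = 0$ because $\zeta(\Lambda, z) - 1/z$ is entire and $1/\lambda(t) - 1/t$ is regular. Exploiting $\lambda(X \oplus Y) = \lambda(X) + \lambda(Y)$ and the classical addition formula
\[
\zeta(\Lambda, z_1 + z_2) - \zeta(\Lambda, z_1) - \zeta(\Lambda, z_2) = \tfrac{1}{2}\,\tfrac{\wp'(\Lambda, z_1) - \wp'(\Lambda, z_2)}{\wp(\Lambda, z_1) - \wp(\Lambda, z_2)},
\]
with the identifications $\wp(\Lambda, \lambda(t)) = x(t)$ and $\wp'(\Lambda, \lambda(t)) = y(t)$, I obtain
\[
F_{\widehat{\eta}_0}(X \oplus Y) - F_{\widehat{\eta}_0}(X) - F_{\widehat{\eta}_0}(Y) = -\tfrac{1}{2}\,\tfrac{y(X) - y(Y)}{x(X) - x(Y)} + \tfrac{1}{X \oplus Y} - \tfrac{1}{X} - \tfrac{1}{Y}.
\]

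The main obstacle is showing that this rational expression actually lies in $\mathbb{Z}_p[\![X, Y]\!]$: the apparent singularities along the axes $X = 0$, $Y = 0$, the diagonal $X = Y$, and the antidiagonal $X \oplus Y = 0$ must all cancel. I would verify these cancellations by local analysis—for instance near $X = 0$, using $x(X) \sim 1/X^2$ and $y(X) \sim -2/X^3$, the pole $-1/X$ coming from $1/(X\oplus Y) - 1/X - 1/Y$ exactly cancels the pole $1/X$ arising from $-\tfrac{1}{2}(y(X) - y(Y))/(x(X) - x(Y))$, and analogous cancellations occur on the other loci. Once regularity is established, integrality of the resulting power series follows from the $\mathbb{Z}_p$-expansions of $x(t), y(t)$ obtained in the first step together with the fact that the formal group law $X \oplus Y$ itself lies in $\mathbb{Z}_p[\![X, Y]\!]$.
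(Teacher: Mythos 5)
First, a remark on the comparison itself: the paper gives \emph{no} proof of this lemma --- it is imported verbatim from the cited reference of Bannai--Kobayashi (their Lemma 3.4) --- so there is no internal argument to measure yours against; what follows is an assessment of your proposal on its own terms. Your overall strategy (expand $x(t),y(t)$ integrally from $t=-2x/y$ and the Weierstrass equation, identify the formal primitive of $\widehat{\eta}_{0}$ as $-\zeta(\Lambda,\lambda(t))+1/t$, and convert the cocycle into a closed rational expression via the addition theorem for $\zeta$) is the standard route, and the first half is correct: since $g_{2}/4,\,g_{3}/4\in\mathbb{Z}_{p}$ for $p\geq 5$, your fixed-point equation gives $u\in\mathbb{Z}_{p}[\![t]\!]$ with $u=1+O(t^{4})$, which yields both $\widehat{\eta}_{0}\in\mathbb{Z}_{p}[\![t]\!]\,dt$ (the $t^{-1}dt$ term vanishes precisely because $u\equiv 1 \bmod t^{2}$) and the displayed formula for the cocycle.

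The gap is in your final paragraph, and it sits exactly where the content of the lemma lies. The ``local analysis'' of pole cancellation only shows that the cocycle lies in $\mathbb{Q}_{p}[\![X,Y]\!]$ --- but that is automatic and needs no argument: $F_{\widehat{\eta}_{0}}$ is by construction a power series in $\mathbb{Q}_{p}[\![t]\!]$, so $F_{\widehat{\eta}_{0}}(X\oplus Y)-F_{\widehat{\eta}_{0}}(X)-F_{\widehat{\eta}_{0}}(Y)$ is a priori an element of $\mathbb{Q}_{p}[\![X,Y]\!]$. The entire point is the $\mathbb{Z}_{p}$-integrality, and your closing assertion that this ``follows from the $\mathbb{Z}_{p}$-expansions of $x(t),y(t)$ together with integrality of the group law'' is not a proof: a quotient of two elements of $\mathbb{Z}_{p}[\![X,Y]\!]$ which happens to lie in $\mathbb{Q}_{p}[\![X,Y]\!]$ need not lie in $\mathbb{Z}_{p}[\![X,Y]\!]$ --- already $X/(X+p)=\sum_{n\geq 1}(-1)^{n-1}X^{n}/p^{n}$ fails in one variable. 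To close the argument one must show that every prime factor of the denominator is \emph{distinguished} and divides the numerator integrally. Concretely: $X-Y$ divides both $Y^{3}u(X)-X^{3}u(Y)$ and $Y^{2}u(X)-X^{2}u(Y)$ in $\mathbb{Z}_{p}[\![X,Y]\!]$ and can be cancelled; then, by Weierstrass preparation over $\mathbb{Z}_{p}[\![Y]\!]$ (using that the reduction of $X^{2}Y^{2}(x(X)-x(Y))$ modulo $(p,Y)$ is $-X^{2}$ times a unit), the remaining cofactor equals $X\oplus Y$ times a unit of $\mathbb{Z}_{p}[\![X,Y]\!]$; finally one checks that the numerator of the combined expression over the common denominator $XY(X\oplus Y)$ is divisible in $\mathbb{Z}_{p}[\![X,Y]\!]$ by each of $X$, $Y$ and $X\oplus Y$, which is legitimate because each of these has nonzero reduction in the domain $\mathbb{F}_{p}[\![X,Y]\!]$. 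Without this preparation-and-divisibility step the proof is incomplete, even though the strategy is the right one.
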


If $p$ is inert in $\mathcal{O}_{K}$, 
then $E$ has supersingular reduction, and the height of the formal group $\widehat{E}/\mathbb{Z}_{p}$ is equal to $2$.
This implies that $\{\widehat{\omega}(t), \omega^{\ast}(t)\}$ is a basis of $H^{1}_{\text{cris}}(\widehat{E}/\mathbb{Z}_{p})$ by Proposition \ref{prop. cris basis}.
The next theorem is the key to determining the $p$-adic valuation of $\alpha_{g}$.

\begin{thm}[{\cite[Corollary 3.8]{bannai2017radius}}]\label{thm A_{2} unit}
We write $\widehat{\eta}_{0}(t) \in H^{1}_{\text{cris}}(\widehat{E}/\mathbb{Z}_{p})$ by 
\begin{align*}
\widehat{\eta}_{0}(t)=A_{1}\widehat{\omega}(t)+A_{2}\omega^{\ast}(t) \text{ in } H^{1}_{\text{cris}}(\widehat{E}/\mathbb{Z}_{p}).
\end{align*}
Then the coefficient $A_{2}$ is a $p$-adic unit.
\end{thm}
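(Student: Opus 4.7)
The plan is to recognize $\{\widehat{\omega}, \widehat{\eta}_0\}$ as a second $\mathbb{Z}_p$-basis of $H^1_{\text{cris}}(\widehat{E}/\mathbb{Z}_p)$ and derive the claim from a change-of-basis computation. By \Cref{prop. cris basis}, $\{\widehat{\omega}, \omega^*\}$ is a $\mathbb{Z}_p$-basis. If I can show that $\{\widehat{\omega}, \widehat{\eta}_0\}$ is likewise a $\mathbb{Z}_p$-basis, then the relation $\widehat{\eta}_0 = A_1\widehat{\omega} + A_2\omega^*$ identifies the transition matrix as the upper-triangular matrix $\left(\begin{smallmatrix} 1 & A_1 \\ 0 & A_2 \end{smallmatrix}\right)$, whose determinant $A_2$ must lie in $\mathbb{Z}_p^\times$.

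The core task is therefore to prove that $\widehat{\omega}$ and $\widehat{\eta}_0$ together generate $H^1_{\text{cris}}(\widehat{E}/\mathbb{Z}_p)$ as a $\mathbb{Z}_p$-module. By Nakayama's lemma applied to the free rank-$2$ $\mathbb{Z}_p$-module structure given by \Cref{prop. cris basis}, it suffices to check that the images of $\widehat{\omega}$ and $\widehat{\eta}_0$ in the $2$-dimensional $\mathbb{F}_p$-vector space $H^1_{\text{cris}}(\widehat{E}/\mathbb{Z}_p) \otimes_{\mathbb{Z}_p} \mathbb{F}_p$ are $\mathbb{F}_p$-linearly independent. I would carry this out by invoking the comparison between the formal crystalline cohomology $H^1_{\text{cris}}(\widehat{E}/\mathbb{Z}_p)$ and the algebraic de Rham cohomology $H^1_{\text{dR}}(E/\mathbb{Z}_p)$ of the global elliptic curve. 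For a Weierstrass model with good reduction at $p \geq 5$, the classes of $dx/y$ and $x\,dx/y$ form a $\mathbb{Z}_p$-basis of $H^1_{\text{dR}}(E/\mathbb{Z}_p)$, and this comparison sends $dx/y \mapsto \widehat{\omega}$ and $x\,dx/y \mapsto \widehat{\eta}_0$. The key point in the second identification is that the pole term $dt/t^2 = -d(1/t)$ is exact on $E\setminus\{O\}$, since $1/t = -y/(2x)$ extends to a regular function there, so it vanishes in $H^1_{\text{dR}}(E/\mathbb{Z}_p)$ even though it fails to be of the second kind on the formal group in the sense of $\mathcal{Z}^1(\widehat{E})$.

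The main obstacle is setting up this integral comparison $H^1_{\text{dR}}(E/\mathbb{Z}_p) \xrightarrow{\sim} H^1_{\text{cris}}(\widehat{E}/\mathbb{Z}_p)$ as $\mathbb{Z}_p$-modules and verifying that the classical algebraic basis is transported to an integral basis on the crystalline side. This is a standard consequence of the integral crystalline/Dieudonn\'e theory for elliptic curves (as developed by Katz and Mazur--Messing, and exploited by Bannai--Kobayashi), but requires care concerning the interplay between polar corrections like $dt/t^2$ and exactness in the two cohomology theories: a global second-kind meromorphic differential on $E$ is pulled back to the formal group at $O$, and the polar part at $O$ (which is exact on $E$ as a function has a Laurent tail there) must be shown to differ from a genuine formal exact $\mathcal{B}^1(\widehat{E})$-class by a bounded correction. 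Once this compatibility is established, the $\mathbb{F}_p$-linear independence check reduces to the well-known fact that $\{d\bar{x}/\bar{y}, \bar{x}\,d\bar{x}/\bar{y}\}$ spans $H^1_{\text{dR}}$ of the supersingular reduction $\bar{E}$ over $\mathbb{F}_p$, which follows from degeneration of the Hodge-to-de Rham spectral sequence for a smooth Weierstrass model.
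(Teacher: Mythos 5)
This statement is imported: the paper offers no proof and simply cites \cite[Corollary 3.8]{bannai2017radius}, so the only meaningful comparison is with the argument in that reference --- and your outline is essentially that argument. The reduction itself is correct: granting that $\{\widehat{\omega},\widehat{\eta}_{0}\}$ is a second $\mathbb{Z}_{p}$-basis of $H^{1}_{\mathrm{cris}}(\widehat{E}/\mathbb{Z}_{p})$, the transition matrix to the basis $\{\widehat{\omega},\omega^{\ast}\}$ of Proposition \ref{prop. cris basis} is $\bigl(\begin{smallmatrix}1 & A_{1}\\ 0 & A_{2}\end{smallmatrix}\bigr)$, and its determinant $A_{2}$ must lie in $\mathbb{Z}_{p}^{\times}$.

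Two caveats. First, a local error: with $t=-2x/y$ the function $1/t=-y/(2x)$ is \emph{not} regular on $E\setminus\{O\}$; it has poles along $x=0$. What is true is that $dt/t^{2}=-d(1/t)$ is the differential of a rational function on $E$, hence dies in the rational de Rham cohomology computed as second-kind differentials modulo exact ones --- but whether this exactness persists integrally is itself part of what must be verified, so this step needs a correct justification rather than the one you gave. Second, and more substantively, the integral comparison isomorphism $H^{1}_{\mathrm{dR}}(E/\mathbb{Z}_{p})\xrightarrow{\sim}H^{1}_{\mathrm{cris}}(\widehat{E}/\mathbb{Z}_{p})$ carrying the class of $x\,dx/y$ to that of $\widehat{\eta}_{0}$ is not a routine formality: it is precisely the content of Section 3 of \cite{bannai2017radius}, where the polar correction $dt/t^{2}$ and the integrality of the comparison are the whole difficulty. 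You correctly identify this as the crux and point to the right literature, but your proposal asserts it rather than proves it, so what you have is the right strategy with the central step outsourced to the very result being proved.
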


We define the submodule $\mathcal{P}$ of $\mathbb{Q}_{p}[\![s]\!]$ by
\begin{align*}
\mathcal{P}:=\{ \sum ^{\infty }_{n=1}a_{n}s^{n} \mid na_{n} \in \mathbb{Z}_{p}  \text{ for all } n\}.
\end{align*}
\begin{dfn}\label{a}
 Let $\varphi$ be the endomorphism of $\mathcal{P}$ satisfying 
 \begin{align*}
 \varphi(\sum ^{\infty }_{n=1}a_{n}s^{n}):=\sum ^{\infty }_{n=1}a_{n}\left((1+s)^{p}-1\right)^{n}
 \end{align*}
 We define the endomorphism $u(\varphi)$ for $u(X)=\sum ^{m }_{n=1}b_{n}X^{n} \in \mathbb{Z}_{p}[X]$ by
 \begin{align*}
 u(\varphi)(f(s)):=\sum ^{m }_{n=1}b_{n}f\left(\varphi^{(n)}(s)\right).
 \end{align*}
\end{dfn}

\begin{lem}[{\cite[Lemma 8.1]{kobayashi2003iwasawa}}]
Let $\varphi$ be the endomorphism in Definition \ref{a}. Then, the action of $\varphi$ induces the Frobenius morphism 
\begin{align*}
\varphi : \mathcal{P}/p\mathbb{Z}_{p}[\![s]\!] \rightarrow \mathcal{P}/p\mathbb{Z}_{p}[\![s]\!].
\end{align*}
In other words, we have that
\begin{align*}
\varphi(\sum ^{\infty }_{n=1}a_{n}s^{n}) \equiv \sum ^{\infty }_{n=1}a_{n}s^{pn} \text{ mod } p\mathbb{Z}_{p}[\![s]\!]
\end{align*}
for all $\sum ^{\infty }_{n=1}a_{n}s^{n} \in \mathcal{P}/\mathbb{Z}_{p}[\![s]\!]$.
\end{lem}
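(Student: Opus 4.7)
The plan is to reduce everything to an elementary $p$-adic estimate coming from the expansion $(1+s)^p - 1 = s^p + p\, h(s)$. Specifically, I would introduce $h(s) := \sum_{k=1}^{p-1}\frac{1}{p}\binom{p}{k}s^k$, which lies in $\mathbb{Z}_p[s]$ because $p \mid \binom{p}{k}$ for $1 \le k \le p-1$. Then the binomial theorem gives
\[
\bigl((1+s)^p - 1\bigr)^n = \bigl(s^p + p\,h(s)\bigr)^n = s^{pn} + \sum_{j=1}^n \binom{n}{j} p^j\, h(s)^j\, s^{p(n-j)}.
\]
Multiplying by $a_n$ and summing over $n \ge 1$, the difference $\varphi\bigl(\sum_n a_n s^n\bigr) - \sum_n a_n s^{pn}$ becomes a double sum over $n \ge 1$ and $1 \le j \le n$ whose terms each lie in $a_n \binom{n}{j} p^j \cdot \mathbb{Z}_p[\![s]\!]$.

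To conclude the congruence modulo $p\mathbb{Z}_p[\![s]\!]$, I would show $a_n \binom{n}{j} p^j \in p\mathbb{Z}_p$ for all $n \ge 1$ and $j \ge 1$. Using the identity $\binom{n}{j} = \frac{n}{j}\binom{n-1}{j-1}$, one rewrites
\[
a_n \binom{n}{j} p^j = (n\, a_n) \cdot \binom{n-1}{j-1} \cdot \frac{p^j}{j}.
\]
By the defining condition of $\mathcal{P}$, $n\, a_n \in \mathbb{Z}_p$, and $\binom{n-1}{j-1} \in \mathbb{Z}$, so it remains to check that $v_p(p^j/j) = j - v_p(j) \ge 1$. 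This inequality holds for all $j \ge 1$ because $p^{v_p(j)} \le j$ forces $v_p(j) \le \log_p j \le j-1$.

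Finally, since $(1+s)^p - 1 = ps + \binom{p}{2}s^2 + \cdots + s^p$ has no constant term, each $\bigl((1+s)^p - 1\bigr)^n$ starts in degree $n$, so the coefficient of $s^m$ in the series $\varphi(f) - \sum_n a_n s^{pn}$ is a finite $\mathbb{Z}_p$-linear combination (with coefficients of shape $a_n \binom{n}{j} p^j$ times an integer) of the terms analyzed above. Thus the congruence holds coefficient-by-coefficient. The whole argument is really just the interplay between the integrality condition $na_n \in \mathbb{Z}_p$ built into $\mathcal{P}$ and the estimate $v_p(p^j/j) \ge 1$; there is no substantive obstacle beyond this bookkeeping.
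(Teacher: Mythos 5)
The paper does not prove this lemma: it is quoted directly from Kobayashi \cite{kobayashi2003iwasawa}, so there is no in-paper argument to compare against. Your proof is correct and complete: the decomposition $(1+s)^p-1=s^p+p\,h(s)$ with $h\in\mathbb{Z}_p[s]$, the rewriting $a_n\binom{n}{j}p^j=(na_n)\binom{n-1}{j-1}\tfrac{p^j}{j}$, and the estimate $v_p(p^j/j)=j-v_p(j)\ge 1$ are precisely what let the integrality condition $na_n\in\mathbb{Z}_p$ built into $\mathcal{P}$ absorb the denominators, and your observation that $\bigl((1+s)^p-1\bigr)^n$ has order at least $n$ in $s$ settles the coefficientwise convergence of the double sum. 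This is essentially the standard argument for the cited result.
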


\begin{dfn}[{\cite[Definition 8.2]{kobayashi2003iwasawa}}]
Let $f(s) \in \mathcal{P}$ and $u(X) \in \mathbb{Z}_{p}[X]$.
If $f(s)$ satisfies 
\begin{align*}
u(\varphi)(f) \equiv u(F)(f) \equiv 0 \text{ mod } p\mathbb{Z}_{p}[\![s]\!]
\end{align*}
and $f'(0)=1$, we say that $f(s)$ is of the Honda type $u$. 
\end{dfn}

\begin{proposition}\label{G type}
The formal power series $E_{g}(q)$ is of the Honda type $X^{2}+p$.
\end{proposition}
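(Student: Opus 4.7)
The plan is to verify directly the defining conditions of Honda type (Definition \ref{a}) for $E_g$ with $u(X) = X^2 + p$. Writing $E_g = \sum_{n \geq 1}(a_g(n)/n)\,s^n$, the conditions $E_g \in \mathcal{P}$ and $E_g'(0) = 1$ are immediate: $n \cdot (a_g(n)/n) = a_g(n) \in \mathbb{Z}_p$ since $g$ has integer Fourier coefficients, and $a_g(1) = 1$ because $g$ is normalized. The substance of the proposition is therefore the congruence $(\varphi^2 + p)(E_g) \equiv 0 \pmod{p\mathbb{Z}_p[\![s]\!]}$.

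By Lemma 8.1 of \cite{kobayashi2003iwasawa} (the lemma immediately preceding Definition \ref{a}), $\varphi$ acts on $\mathcal{P}/p\mathbb{Z}_p[\![s]\!]$ as $s \mapsto s^p$, so modulo $p$ one has $\varphi^2(E_g) \equiv \sum_{n \geq 1}(a_g(n)/n)\,s^{p^2 n}$. Hence the coefficient of $s^m$ in $(\varphi^2 + p)(E_g)$ modulo $p$ equals $p\,a_g(m)/m$ when $p^2 \nmid m$, and $p^2 a_g(m/p^2)/m + p\,a_g(m)/m$ when $p^2 \mid m$. I would verify that each of these lies in $p\mathbb{Z}_p$ by case analysis on $v_p(m)$. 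The input is the Hecke structure of $g$: since $p$ is inert in $\mathcal{O}_K$ and $p \nmid N$, no ideal of $\mathcal{O}_K$ has norm $p$, so $a_g(p) = 0$. The degree-two Hecke recursion then collapses to $a_g(p^{k+1}) = -p\,a_g(p^{k-1})$, and multiplicativity yields $a_g(p^k m') = a_g(p^k)\,a_g(m')$ whenever $\gcd(m', p) = 1$.

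When $v_p(m) = 0$, $1/m \in \mathbb{Z}_p^{\times}$ so $p\,a_g(m)/m \in p\mathbb{Z}_p$ trivially. When $v_p(m) = 1$, $a_g(m) = a_g(p) a_g(m/p) = 0$. When $v_p(m) = k \geq 2$, writing $m = p^k m'$, the coefficient rearranges to $(a_g(m')/m')\bigl(a_g(p^{k-2})/p^{k-2} + a_g(p^k)/p^{k-1}\bigr)$, and the recursion $a_g(p^k) = -p\,a_g(p^{k-2})$ makes the bracket vanish identically in $\mathbb{Q}_p$. The three cases together establish $(\varphi^2 + p)(E_g) \equiv 0 \pmod{p\mathbb{Z}_p[\![s]\!]}$.

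The main obstacle is essentially bookkeeping: the argument uses nothing beyond the mod-$p$ reduction of $\varphi$ and the elementary Hecke relations for a CM eigenform. The conceptual point one needs to spot is that the inertness of $p$ in the CM field forces $a_g(p) = 0$, which is precisely what makes $X^2 + p$ (the supersingular characteristic polynomial of Frobenius) the correct Honda polynomial in place of the generic $X^2 - a_g(p) X + p$; once this is noted, the case-by-case verification is uniform and short.
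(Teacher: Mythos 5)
Your proof is correct and follows essentially the same route as the paper's: both reduce the congruence $(\varphi^2+p)E_g\equiv 0 \pmod{p\mathbb{Z}_p[\![s]\!]}$ to the Hecke relation at $p$ for the CM form (inertness forcing $a_g(p)=0$, hence $a_g(p^{k})=-p\,a_g(p^{k-2})$), the only difference being that you organize the verification by cases on $v_p(m)$ while the paper regroups the sums. Your version is if anything slightly more careful, since you explicitly invoke the mod-$p$ Frobenius property of $\varphi$ and check the side conditions $E_g\in\mathcal{P}$ and $E_g'(0)=1$, which the paper leaves implicit.
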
  

\begin{proof}
We have that
\begin{align*}
(\varphi^{2}+p)E_{g}(q)&=\sum _ { n = 1 } ^ { \infty } \frac { a _ { n } } { n } q ^ { p ^ { 2 } n } + \sum _ { k = 1 } ^ { \infty } \frac { p a _ { n } } { n } q ^ { n }
\\
&= \sum ^{\infty }_{n=1}\left( \dfrac{a_{n}}{n}+\dfrac{a_{p^{2}n}}{pn}\right) q^{p^{2}n}+\sum _{p \nmid n}\dfrac{a_{pn}}{n}q^{pn}+\sum _{p \nmid n}p\dfrac{a_{n}}{n}q^{n},
\end{align*}
and 
\begin{align*}
a_{pn}+pa_{n/p}=0
\end{align*}
holds because $T_{2, p}g=a_{g}(p)g=0$. Therefore, we obtain that
\begin{align*}
\dfrac{a_{n}}{n}+\dfrac{a_{p^{2}n}}{pn}=\dfrac{a_{n}}{n}-\dfrac{pa_{n}}{pn}=0
\end{align*}
 for all $n$ and
\begin{align*}
\dfrac{a_{pn}}{n}=-\dfrac{a_{n/p}}{n}=0
\end{align*}
for all $p \nmid n$.
From these results, we conclude that
\begin{align*}
(\varphi^{2}+p)E_{g}(q)=\sum _{p \nmid n}p\dfrac{a_{n}}{n}q^{n} \in p\mathbb{Z}_{p}[\![q]\!].
\end{align*}
\end{proof}
We define an Eisenstein polynomial to a polynomial 
\begin{align*}
u(X)=a_{n}X^{n}+a_{n-1}X^{n-1}+ \cdots +a_{0}
\end{align*}
satisfying that $a_{n}$ is a unit, $p \mid a_{i}$ for all $i \not=n$ and $a_{0}=p$. 

\begin{thm}[{\cite[Theorem 8.3]{kobayashi2003iwasawa}}]\label{thm exist formal group}
Let $u(X)$ be an Eisenstein polynomial and $f \in \mathcal{P}$ be of the Honda type $u$. Then there exists a formal group over $\mathbb{Z}_{p}$ with the logarithm $f$.
\end{thm}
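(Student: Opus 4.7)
My plan is to construct the formal group law as the conjugate of ordinary addition by $f$ and then establish $p$-adic integrality via Honda's functional equation principle. Since $f \in \mathcal{P}$ has $f'(0) = 1$, its compositional inverse $f^{-1} \in \mathbb{Q}_{p}[\![s]\!]$ exists, and I would set
\begin{align*}
F(X,Y) := f^{-1}\bigl(f(X) + f(Y)\bigr) \in \mathbb{Q}_{p}[\![X,Y]\!].
\end{align*}
All formal group axioms --- commutativity, the identity $F(X,0) = X$, associativity, and the existence of an inverse --- hold automatically as formal power series identities, being pulled back from the axioms for ordinary addition through the change of coordinates $f$. The entire content of the theorem is therefore the integrality statement $F \in \mathbb{Z}_{p}[\![X,Y]\!]$.

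For integrality, I would invoke Honda's functional equation principle. One extends $\varphi$ to act on $\mathbb{Q}_{p}[\![X,Y]\!]$ by applying it separately in each variable ($X \mapsto (1+X)^{p} - 1$, $Y \mapsto (1+Y)^{p} - 1$), and then $\tilde{f}(X,Y) := f(X) + f(Y)$ inherits the Honda-type condition $u(\varphi)(\tilde{f}) \in p\mathbb{Z}_{p}[\![X,Y]\!]$ from the assumed condition on $f$, because $u(\varphi)$ is $\mathbb{Z}_{p}$-linear and respects the variable decomposition of $\tilde{f}$. The functional equation principle then guarantees that $F = f^{-1}(\tilde{f})$ lies in $\mathbb{Z}_{p}[\![X,Y]\!]$.

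The proof of the functional equation principle, which is the technical heart, proceeds by induction on total degree. Writing the Eisenstein polynomial as $u(X) = X^{n} + p\,v(X)$ with $v(0) = 1$ (using $u(0) = p$), one recasts the identity $f(F) = \tilde{f}$ as a recursion for the coefficients of $F$, assumes integrality through degree $m - 1$, and reduces modulo $p$ at degree $m$. Two facts are then crucial: (a) the Honda-type condition $u(\varphi)(f) \in p\mathbb{Z}_{p}[\![s]\!]$ controls how the non-integral denominators of $f$ propagate through composition, and (b) $\varphi$ reduces modulo $p$ to the Frobenius endomorphism $X \mapsto X^{p}$, $Y \mapsto Y^{p}$ on $\mathbb{F}_{p}[\![X,Y]\!]$, which is injective. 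The main obstacle is that $u(\varphi)$ is not a ring homomorphism and does not commute with composition; hence one cannot simply apply $u(\varphi)$ to both sides of $f(F) = \tilde{f}$ and cancel, and the induction must be performed coefficient by coefficient, using the Honda-type hypothesis on $f$ at each stage to convert its $p$-divisibility into integrality of the next-degree coefficients of $F$.
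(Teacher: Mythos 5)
The paper does not prove this statement at all: it is imported verbatim as \cite[Theorem 8.3]{kobayashi2003iwasawa}, which in turn rests on Honda's theory of formal groups, so there is no in-paper argument to compare against. Your proposal reconstructs exactly the standard proof behind that citation: set $F(X,Y)=f^{-1}(f(X)+f(Y))$, observe that the group axioms are automatic and the whole content is integrality, and obtain integrality from Honda's functional equation lemma applied to $\tilde f(X,Y)=f(X)+f(Y)$, which inherits the type-$u$ condition by linearity. That outline is correct and is the right route.

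Two points deserve more care than your sketch gives them. First, the $\varphi$ of Definition \ref{a} is the substitution $s\mapsto(1+s)^{p}-1$, not $s\mapsto s^{p}$, so Honda's functional equation lemma does not apply verbatim: you must first show that ``Honda type $u$'' for this $\varphi$ implies the classical functional equation with $s\mapsto s^{p}$ (or redo the lemma for this $\varphi$). This uses the quoted Lemma 8.1 together with the elementary but non-vacuous estimate that for $f\in\mathcal{P}$ and $A\equiv B \bmod p$ with $A,B\in s\mathbb{Z}_{p}[\![s]\!]$ one has $f(A)\equiv f(B)\bmod p\mathbb{Z}_{p}[\![s]\!]$ (the point being that $a_{n}\binom{n}{k}p^{k}=(na_{n})\binom{n-1}{k-1}\tfrac{p^{k}}{k}\in p\mathbb{Z}_{p}$ even though $a_{n}$ itself may have denominator $n$); iterates $\varphi^{(i)}$ versus $s\mapsto s^{p^{i}}$ need this estimate as well. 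Second, in the induction that is the ``technical heart,'' the precise mechanism is the congruence $f(g_{1})\equiv f(g_{2})\bmod p^{r}$ for integral $g_{1}\equiv g_{2}\bmod p^{r}$ with zero constant term, applied to compare $\tilde f$ with $f(F_{m})$ for the degree-$\le m$ truncation $F_{m}$; saying that the Honda condition ``controls how denominators propagate'' names the phenomenon but not the lemma that does the work. Neither point is a wrong turn, only an under-specified one; with them filled in (or with a direct appeal to Honda's or Hazewinkel's functional equation lemma in its generalized form), your argument is complete. A trivial remark: the paper's Eisenstein polynomials have unit, not necessarily monic, leading coefficient, so your normalization $u(X)=X^{n}+pv(X)$ should be prefaced by dividing by $a_{n}$.
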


Combining Proposition \ref{G type} and Theorem \ref{thm exist formal group}, there exists a formal group $\mathcal{G}$ over $\mathbb{Z}_{p}$ whose logarithm is equal to $E_{g}(q)$. Next, we show that $\mathcal{G}$ is isomorphic to $\widehat{E}$.

\begin{thm}[{\cite[Theorem 8.4]{kobayashi2003iwasawa}}]\label{E type}
Let $\log_{\widehat{E}}$ be the the logarithm of $\widehat{E}$. 
Then $\log_{\widehat{E}}$ is of the Honda type $X^{2}+p$.
\end{thm}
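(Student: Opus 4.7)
The plan is to identify $\widehat E$ with the formal group $\mathcal G$ furnished by Proposition \ref{G type} and Theorem \ref{thm exist formal group}, so that $\log_{\widehat E}$ becomes a strict reparametrization of $E_g$ and inherits its Honda type $X^{2}+p$. The identification comes from the modular parameterization $\phi\colon X_{0}(N)\to E$ attached to $g$: since $g$ is the normalized newform corresponding to $E$, the pullback of the invariant differential is $\phi^{*}\omega_{E}=2\pi i g(\tau)\,d\tau$. In the local parameter $q=e^{2\pi i\tau}$ at the cusp $\infty$, this reads $\phi^{*}\omega_{E}=\sum_{n\geq 1}a_{g}(n)\,q^{n-1}\,dq$, whose leading coefficient in $dq$ equals $a_{g}(1)=1$.

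Writing $t=t(q)\in q\mathbb{Z}_{p}[\![q]\!]$ for the $q$-expansion at $\infty$ of the Weierstrass formal parameter $t=-2x/y$ of $\widehat E$, the identity $\omega_{E}\big|_{t=t(q)}=\phi^{*}\omega_{E}$ together with $\omega_{E}=(1+O(t))\,dt$ forces $t(q)\equiv q\pmod{q^{2}}$; integrating then gives $\log_{\widehat E}(t(q))=E_{g}(q)$. Hence $q\mapsto t(q)$ is a strict formal isomorphism $\mathcal G\xrightarrow{\sim}\widehat E$ over $\mathbb{Z}_{p}$. Because the Honda-type condition is preserved under a strict formal isomorphism over $\mathbb{Z}_{p}$ --- as one checks from $t(q^{p^{n}})\equiv t(q)^{p^{n}}\pmod p$ (the Freshman's dream applied to $t(q)\in\mathbb{Z}_{p}[\![q]\!]$) together with the fact that $\log_{\widehat E}(X)-\log_{\widehat E}(Y)\in p\mathbb{Z}_{p}[\![q]\!]$ whenever $X\equiv Y\pmod p$ (because $\log_{\widehat E}'\in 1+t\mathbb{Z}_{p}[\![t]\!]$) --- the Honda type $X^{2}+p$ for $E_{g}$ established in Proposition \ref{G type} transfers to $\log_{\widehat E}$.

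The main obstacle is the integrality $t(q)\in\mathbb{Z}_{p}[\![q]\!]$ of the reparametrization: this is a Manin-constant-type assertion about the modular parameterization, which for $p\geq 5$ and good reduction of the Weierstrass model at $p$ follows from standard results of Edixhoven and Abbes--Ullmo. Tracking this integrality carefully, and ensuring compatibility with the possible extra factor $C_{g}=\deg\phi$ when $\phi$ is not the optimal parameterization, is the technically most delicate part of the argument.
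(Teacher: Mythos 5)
The paper offers no proof of this statement at all: it is quoted from Kobayashi [Theorem 8.4], which is Honda's theorem that for an elliptic curve over $\mathbb{Z}_{p}$ with good reduction the formal group of a $p$-minimal Weierstrass model has logarithm of Honda type $X^{2}-a_{p}X+p$; here $a_{p}=a_{g}(p)=0$ because $p$ is inert in $\mathcal{O}_{K}$. Honda's proof is purely local, resting on the Eichler--Shimura-type congruence satisfied by Frobenius on the reduction of $\widehat{E}$, and makes no use of the modular parametrization. Your argument is therefore a genuinely different, global route. The transfer step itself is sound: since $\log_{\widehat{E}}=\sum a_{n}t^{n}$ with $na_{n}\in\mathbb{Z}_{p}$, one has $\log_{\widehat{E}}(A)\equiv\log_{\widehat{E}}(B)\pmod{p}$ whenever $A\equiv B\pmod{p}$, and an invertible substitution $t=t(q)\in q+q^{2}\mathbb{Z}_{p}[\![q]\!]$ reflects membership in $p\mathbb{Z}_{p}[\![q]\!]$, so the Honda condition for $E_{g}$ from Proposition \ref{G type} would indeed pass to $\log_{\widehat{E}}$.

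The problem is that the load-bearing step --- the integrality $t(q)\in\mathbb{Z}_{p}[\![q]\!]$, i.e., that the formal completion of $\phi$ at the cusp is a $\mathbb{Z}_{p}$-isomorphism $\mathcal{G}\xrightarrow{\sim}\widehat{E}$ --- is essentially as deep as the statement being proved, and you do not actually supply it. The Edixhoven/Abbes--Ullmo results you cite control the Manin constant, i.e., the leading coefficient of $t(q)$, not the integrality of the entire series; what is really needed is that $\phi$ extends to a morphism of smooth $\mathbb{Z}_{p}$-models carrying the cusp $\infty$ to the origin of the Weierstrass model \eqref{shiki} (available for $p\nmid N$ and the optimal quotient via the N\'eron mapping property, but this is a nontrivial input that must be stated and matched to the specific model). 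Moreover, within this paper your argument inverts the logical architecture: the isomorphism $\mathcal{G}\cong\widehat{E}$ over $\mathbb{Z}_{p}$ is exactly what Theorem \ref{thm isom of formal group} \emph{derives from} Theorem \ref{E type} together with Proposition \ref{G type}, so assuming a geometric form of that isomorphism in order to prove Theorem \ref{E type} trades Honda's elementary local computation for a heavier global input and makes the Honda-theoretic derivation redundant. Finally, your worry about an ``extra factor $C_{g}$'' is misplaced: $E=\mathbb{C}/\Lambda$ with $\Lambda$ the full period lattice of $g$ is the optimal quotient and $\phi^{*}\omega_{E}=2\pi i\,g(\tau)\,d\tau$ holds exactly by construction, so $C_{g}=\deg\phi$ never enters the formal completion at the cusp.
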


\begin{thm}[{\cite[Theorem 8.3]{kobayashi2003iwasawa}}]\label{isom}
Let $\mathcal{F}$ and $\mathcal{F'}$ be the formal groups over $\mathbb{Z}_{p}$. If their logarithm are of the same Honda type $u$, then $\exp_{\mathcal{F}'} \circ \log_{\mathcal{F}}$ is a formal power series defined over $\mathbb{Z}_{p}$ and gives an isomorphism $\mathcal{F} \rightarrow \mathcal{F}'$.
\end{thm}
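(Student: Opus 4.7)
The plan is to define $h(X) := \exp_{\mathcal{F}'}(\log_{\mathcal{F}}(X)) \in \mathbb{Q}_{p}[[X]]$ and verify three properties in sequence: that $h$ is a formal group homomorphism $\mathcal{F} \to \mathcal{F}'$, that its coefficients actually lie in $\mathbb{Z}_{p}$, and that it has a compositional inverse in $\mathbb{Z}_{p}[[X]]$, hence is an isomorphism.

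The homomorphism property follows immediately from the defining feature of logarithms, namely that $\log_{\mathcal{F}}$ intertwines $\oplus_{\mathcal{F}}$ with ordinary addition and $\exp_{\mathcal{F}'}$ inverts this for $\mathcal{F}'$:
\begin{align*}
h(X \oplus_{\mathcal{F}} Y) = \exp_{\mathcal{F}'}\bigl(\log_{\mathcal{F}}(X) + \log_{\mathcal{F}}(Y)\bigr) = h(X) \oplus_{\mathcal{F}'} h(Y).
\end{align*}
In addition $h(X) \equiv X \pmod{X^{2}}$, since both logarithms are normalized with leading coefficient one.

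The decisive step is showing $h \in \mathbb{Z}_{p}[[X]]$, despite the fact that $\log_{\mathcal{F}}$ and $\exp_{\mathcal{F}'}$ individually have unbounded $p$-adic denominators. I would invoke Honda's functional equation lemma in the form: if $f_{1}, f_{2} \in \mathbb{Q}_{p}[[X]]$ both satisfy $f_{i}(X) \equiv X \pmod{X^{2}}$ and the Honda congruence $u(\varphi)(f_{i}) \equiv 0 \pmod{p\,\mathbb{Z}_{p}[[X]]}$ for a common Eisenstein polynomial $u$, then the composition $f_{2}^{-1} \circ f_{1}$ lies in $\mathbb{Z}_{p}[[X]]$. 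Taking $f_{1} = \log_{\mathcal{F}}$ and $f_{2} = \log_{\mathcal{F}'}$ yields the desired integrality of $h$. The lemma itself is proved by induction on the degree of the coefficients of $h = f_{2}^{-1} \circ f_{1}$: assuming the coefficients of degree $< N$ are $p$-adically integral, one extracts the $N$-th coefficient from the identity $f_{2}(h(X)) = f_{1}(X)$ and uses the two Honda conditions to force integrality. I expect the main technical obstacle here to be that $u(\varphi)$ is a Frobenius substitution operator rather than a derivation, so comparing $u(\varphi)(f_{2} \circ h)$ with $u(\varphi)(f_{2})$ evaluated at $h$ requires a careful reduction modulo $p$, exploiting $\varphi(h(X)) \equiv h(X)^{p} \pmod{p\,\mathbb{Z}_{p}[[X]]}$ together with the inductive hypothesis on the low-order coefficients of $h$.

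For the final claim, $h'(0) = \exp_{\mathcal{F}'}'(0) \cdot \log_{\mathcal{F}}'(0) = 1$, which is a unit in $\mathbb{Z}_{p}$, so the formal inverse function theorem over $\mathbb{Z}_{p}$ produces a compositional inverse $h^{-1} \in \mathbb{Z}_{p}[[X]]$. By the symmetric construction (exchanging the roles of $\mathcal{F}$ and $\mathcal{F}'$), this inverse must coincide with $\exp_{\mathcal{F}} \circ \log_{\mathcal{F}'}$ and is itself a formal group homomorphism. Hence $h$ is a formal group isomorphism $\mathcal{F} \to \mathcal{F}'$ defined over $\mathbb{Z}_{p}$.
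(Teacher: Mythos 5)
The paper offers no proof of this statement: it is quoted verbatim from Kobayashi \cite{kobayashi2003iwasawa}, whose Theorem 8.3 in turn rests on Honda's theory of formal groups, so there is nothing in-paper to compare against. Your proposal correctly reconstructs the standard argument: the formal homomorphism property and the unit derivative at $0$ are routine, and the entire content is the integrality of $h=\exp_{\mathcal{F}'}\circ\log_{\mathcal{F}}$, which is exactly Honda's functional equation lemma (two logarithms of the same Honda type $u$ have $p$-integral transition series), proved by the coefficient-by-coefficient induction you describe, with the substitution $\varphi(h(X))\equiv h(X)^{p}\pmod{p\,\mathbb{Z}_{p}[[X]]}$ as the key technical input. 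You leave that lemma's induction as a sketch rather than carrying it out, but you identify the correct lemma, the correct hypotheses, and the genuine technical obstacle, so the proposal matches the standard proof in structure and substance.
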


\begin{thm}\label{thm isom of formal group}
Let $f(q)$ be the formal power series defined by $f(q)=\exp_{\widehat{E}} \circ \log_{\mathcal{G}}$. Then $f(q)$ is an element of $\mathbb{Z}_{p}[\![q]\!]$ and gives an isomorphism from $\mathcal{G}$ to $\widehat{E}$.
\end{thm}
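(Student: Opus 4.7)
The plan is to combine the previously stated Honda-type results essentially verbatim. By Proposition \ref{G type}, the logarithm $E_{g}(q)$ of the formal group $\mathcal{G}$ is of the Honda type $u(X) = X^{2} + p$. By Theorem \ref{E type}, the logarithm $\log_{\widehat{E}}$ of the formal group $\widehat{E}$ is also of the Honda type $X^{2} + p$. Since $X^{2} + p$ is an Eisenstein polynomial (leading coefficient $1$ is a unit, the middle coefficient $0$ is divisible by $p$, and the constant term is exactly $p$), both $\mathcal{G}$ and $\widehat{E}$ fall into the setting of Theorem \ref{isom}.

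Therefore I would apply Theorem \ref{isom} directly, taking $\mathcal{F} = \mathcal{G}$ and $\mathcal{F}' = \widehat{E}$ with common Honda type $u(X) = X^{2} + p$. This immediately yields that $f(q) = \exp_{\widehat{E}} \circ \log_{\mathcal{G}}$ lies in $\mathbb{Z}_{p}[\![q]\!]$ and defines an isomorphism $\mathcal{G} \to \widehat{E}$ of formal groups over $\mathbb{Z}_{p}$.

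Since every nontrivial input (the Honda-type property of $E_{g}(q)$, the Honda-type property of $\log_{\widehat{E}}$, and the isomorphism criterion) has already been established in the paper, there is no real obstacle here; the theorem is a straightforward corollary of Proposition \ref{G type}, Theorem \ref{E type}, and Theorem \ref{isom}. The only minor point worth checking explicitly is the Eisenstein condition on $X^{2} + p$, which is immediate from the definition given just before Theorem \ref{thm exist formal group}.
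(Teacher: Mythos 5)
Your proof is correct and matches the paper's own argument exactly: the paper also derives this theorem directly from Proposition \ref{G type}, Theorem \ref{E type}, and Theorem \ref{isom}. Your extra remark verifying that $X^{2}+p$ is Eisenstein is a harmless (and slightly more careful) addition.
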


\begin{proof}
This theorem is followed by Proposition \ref{G type}, Theorem \ref{E type} and Theorem \ref{isom}.
\end{proof}

\section{The $p$-adic properties of weight $0$ mock modular forms}\label{sec p-adic mock 0}
In this section, we review of \cite{guerzhoy2014zagier} in the case that  $g \in S_{2}( \Gamma _{0}( N), \mathbb{Z} )$ is a normalized newform with complex multiplication by an imaginary quadratic field $K$. We refer to \cite{guerzhoy2014zagier} \cite{alfes2015weierstrass} as main references.
 We have the map $\phi$ from $X_{0}(N)$ to $\mathbb{C}/\Lambda$ defined by
\begin{align*}
\phi : \tau \mapsto -\int^{i\infty}_{\tau} 2\pi ig(\tau')d\tau'=\sum_{n \geq 1}\dfrac{a_{g}(n)} {n}q^{n} \text{ mod } \Lambda
\end{align*}
where $\Lambda$ is the period latticeof $g$ and $q=e^{2\pi i \tau}$.
For $z \in \mathbb{C}$, we define the Weierstrass $\zeta$-function associated with $\Lambda$ by
\begin{align*}
\zeta ( \Lambda ,z) :=\dfrac{1}{z}+\sum _{\lambda \in \Lambda \backslash \left\{ 0\right\} }\left( \dfrac{1}{z-\lambda}+\dfrac{z}{\lambda^{2}} + \dfrac{1}{\lambda}\right).
\end{align*}
It is well-known that $\zeta ( \Lambda ,z)$ is a meromorphic function on $\mathbb{C}$ and not $\Lambda$-periodic.
However, we could modify $\zeta ( \Lambda ,z)$ to a $\Lambda$-periodic function. Let $a(\Lambda)$ be the area of the fundamental parallelogram of $\Lambda$, and we define the complex number $\mathbb{S}(\Lambda)$ is the Eisenstein number of weight $2$ by
\begin{align*}
\mathbb{S}(\Lambda):=\lim _{s\rightarrow 0}\sum _{\lambda \in \Lambda \backslash \left\{ 0\right\} }\dfrac{1}{\lambda^{2}\left| \lambda\right| ^{2s}}
\end{align*}
Then, it is known that the function 
\begin{align*}
R(z):=\zeta ( \Lambda ,z) -\mathbb{S}(\Lambda)z - \dfrac{\pi}{a(\Lambda)}\overline{z}
\end{align*}
is $\Lambda$-periodic.
Therefore, we can consider the map $\mathcal{N}:=R \circ \phi$ from $X_{0}(N)$ to $\mathbb{C}$. The function $\mathcal{N}(\tau)$ has a certain harmonic property. We define the $q$-expansion of $\mathcal{N}(\tau)$ by $\mathcal{N}(q):=R(E_{g}(q))$.
The $q$-series $\mathcal{N}(q)$ decomposes into two sums $\mathcal{N}(q)=\mathcal{N}^{+}(q)+\mathcal{N}^{-}(q)$ where
\begin{align*}
\mathcal{N}^{+}(q):=\zeta ( \Lambda ,E_{g}(q)) -\mathbb{S}(\Lambda)E_{g}(q), \quad \mathcal{N}^{-}(q):=- \dfrac{\pi}{a(\Lambda)}\overline{E_{g}(q)}
\end{align*}
Let $F$ be a harmonic Maass form that is good for $g$ and $C_{g}$ be the degree of the map $\phi : X_{0}(N) \rightarrow \mathbb{C}/ \Lambda$.
We show that $\mathcal{N}(q)-C_{g}F(q) \in \mathbb{Z}_{p}(\!(q)\!) \otimes \mathbb{Q}_{p}$ for all inert prime $p \nmid N$.

\begin{lem}[{\cite[Proposition 2]{guerzhoy2014zagier}}]
We have that
 \begin{align*}
 \mathcal{N}(q)-C_{g}F(q)=\mathcal{N}^{+}(q)-C_{g}F^{+}(q) \in \mathbb{Q}(\!(q)\!).
 \end{align*}
\end{lem}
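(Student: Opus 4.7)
The plan is to verify the two assertions of the lemma separately: (a) the non-holomorphic contributions of $\mathcal{N}(q)$ and $C_{g}F(q)$ cancel, yielding the displayed equality, and (b) the resulting holomorphic $q$-series lies in $\mathbb{Q}(\!(q)\!)$.

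For (a), I would expand both non-holomorphic parts and match coefficients term by term. Since $a_{g}(n)\in\mathbb{Z}$, the definition immediately gives $\mathcal{N}^{-}(q)=-\frac{\pi}{a(\Lambda)}\sum_{n>0}\frac{a_{g}(n)}{n}\bar{q}^{n}$. On the other hand, any $F\in H_{0}(\Gamma_{0}(N))$ has non-holomorphic part of the form $F^{-}(q)=\sum_{n>0}a^{-}(-n)\bar{q}^{n}$, and the relation $\xi_{0}(F)=g/\|g\|^{2}$ (which holds because $F$ is good for $g$) translates, after computing $\xi_{0}=2i\,\overline{\partial/\partial\bar{\tau}}$ term by term, into $-4\pi n\,a^{-}(-n)=a_{g}(n)/\|g\|^{2}$. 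Therefore
\[
C_{g}F^{-}(q)=-\frac{C_{g}}{4\pi\|g\|^{2}}\sum_{n>0}\frac{a_{g}(n)}{n}\bar{q}^{n},
\]
and the desired identity $\mathcal{N}^{-}(q)=C_{g}F^{-}(q)$ reduces to the classical formula $\|g\|^{2}=C_{g}\,a(\Lambda)/(4\pi^{2})$ relating the Petersson norm of $g$ to the degree of the modular parametrization $\phi$ and the area of the period lattice. This in turn follows from the pull-back computation $\phi^{*}(dx_{E}\wedge dy_{E})=4\pi^{2}|g(\tau)|^{2}\,dx\wedge dy$ (using $\phi'(\tau)=2\pi ig(\tau)$) combined with $\int_{X_{0}(N)}\phi^{*}(dx_{E}\wedge dy_{E})=C_{g}\cdot a(\Lambda)$ for the degree-$C_{g}$ covering $\phi$.

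For (b), both $\mathcal{N}^{+}(q)$ and $F^{+}(q)$ individually have Fourier coefficients in $\mathbb{Q}$. The Laurent expansion of $\zeta(\Lambda,z)$ about $z=0$ has coefficients that are polynomials in $g_{2}(\Lambda),g_{3}(\Lambda)\in\mathbb{Q}$; together with $E_{g}(q)\in\mathbb{Q}[\![q]\!]$ (whose leading term is $q$) and $\mathbb{S}(\Lambda)\in\mathbb{Q}$ (recorded in the introduction via the CM property of $E$), this yields $\mathcal{N}^{+}(q)\in\mathbb{Q}(\!(q)\!)$. For $F^{+}(q)$, the Hecke field of $g$ equals $\mathbb{Q}$ since $g$ has integer Fourier coefficients, so \Cref{Thm alg of cof} gives $F^{+}(q)\in\mathbb{Q}(\!(q)\!)$. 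Subtracting the two power series completes the argument.

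The main obstacle is the Petersson-norm identity $\|g\|^{2}=C_{g}\,a(\Lambda)/(4\pi^{2})$, which is not recorded earlier in the excerpt; I would either cite a standard reference on modular parametrizations of elliptic curves or spell out the short pull-back calculation sketched above. Everything else reduces to routine manipulation of $q$-expansions.
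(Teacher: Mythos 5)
Your argument is correct and is essentially the proof of the cited result \cite[Proposition 2]{guerzhoy2014zagier} (the paper itself only cites it): the cancellation $\mathcal{N}^{-}(q)=C_{g}F^{-}(q)$ comes from $\xi_{0}(F)=g/\|g\|^{2}$ together with the degree formula $4\pi^{2}\|g\|^{2}=C_{g}\,a(\Lambda)$, and rationality follows for $\mathcal{N}^{+}$ from $g_{2}(\Lambda),g_{3}(\Lambda),\mathbb{S}(\Lambda)\in\mathbb{Q}$ and for $F^{+}$ from Theorem \ref{Thm alg of cof} with Hecke field $F_{g}=\mathbb{Q}$. The one point to make explicit is that the degree formula holds for the unnormalized weight-$2$ Petersson norm $\|g\|^{2}=\int_{\Gamma_{0}(N)\backslash\mathbb{H}}|g(\tau)|^{2}\,dx\,dy$ (the convention implicit in Definition \ref{good}); with that convention fixed, your pull-back computation $\phi^{*}\bigl(\tfrac{i}{2}\,dz\wedge d\bar z\bigr)=4\pi^{2}|g(\tau)|^{2}\,dx\wedge dy$ closes the gap you flagged, since the paper's $\phi$ satisfies $\phi^{*}(dz)=2\pi i g(\tau)\,d\tau$ exactly and no Manin constant intervenes.
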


We denote the $q$-expansion of $\mathcal{N}^{+}(q)-C_{g}F^{+}(q)$  by $\sum b(n)q^{n}$.

\begin{lem}\label{lem. con. rad.}
The radius of convergence of the non-principal part of $\mathcal{N}^{+}(q)-C_{g}F^{+}(q)$ is at least $1$.
\end{lem}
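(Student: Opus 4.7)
The plan is to identify $\mathcal{N}^{+}(q)-C_{g}F^{+}(q)$ as the $q$-expansion at the cusp $\infty$ of a weakly holomorphic modular form on $\Gamma_{0}(N)$ of weight $0$, i.e., an element of $M_{0}^{!}(\Gamma_{0}(N),\mathbb{Q})$. Once this identification is established, the conclusion is automatic: every weakly holomorphic modular form is holomorphic on all of $\mathbb{H}$, which under $q=e^{2\pi i\tau}$ corresponds to the open unit disk, so the non-principal part of its Fourier expansion at $\infty$ converges for every $|q|<1$.

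The first step is to verify $\xi_{0}(\mathcal{N}-C_{g}F)=0$ on the smooth locus. Differentiating $\mathcal{N}^{-}(\tau)=-\frac{\pi}{a(\Lambda)}\overline{E_{g}(\tau)}$ in $\overline{\tau}$ (using $\partial_{\overline{\tau}}\overline{E_{g}}=-2\pi i\,\overline{g}$ since $a_{g}(n)\in\mathbb{R}$) and then conjugating gives $\xi_{0}(\mathcal{N})=\frac{4\pi^{2}}{a(\Lambda)}g$. Combining this with the shadow condition $\xi_{0}(F)=g/\|g\|^{2}$ and the classical modular-degree identity $C_{g}=\frac{4\pi^{2}\|g\|^{2}}{a(\Lambda)}$ yields the desired vanishing. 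Together with the preceding lemma, which asserts $\mathcal{N}(q)-C_{g}F(q)=\mathcal{N}^{+}(q)-C_{g}F^{+}(q)\in\mathbb{Q}((q))$ and in particular $\mathcal{N}^{-}=C_{g}F^{-}$, we conclude that $\mathcal{N}^{+}-C_{g}F^{+}$ is a $\Gamma_{0}(N)$-invariant holomorphic function on $\mathbb{H}$ away from the interior preimages of $0\in E$ under $\phi$, and its Laurent expansion at $\infty$ has only finitely many negative-order terms with rational coefficients.

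The crucial step is to show that the apparent interior poles of $\mathcal{N}^{+}$ do not survive in the difference. These come from the poles of $\zeta(\Lambda,z)$ at nonzero lattice points pulled back through $E_{g}$ at interior points $\tau_{0}\in\mathbb{H}$ with $\phi(\tau_{0})=0$. I would compute the local residues of $\mathcal{N}^{+}$ at such $\tau_{0}$ (in terms of $E_{g}'(\tau_{0})$ and the local ramification of $\phi$) and then apply the residue theorem on the compact Riemann surface $X_{0}(N)$ to the meromorphic function $\mathcal{N}^{+}-C_{g}F^{+}$. Since the principal parts at every cusp are controlled (by the definition of good for $g$ on the $F$-side and by the Laurent expansion of $\zeta(\Lambda,z)$ near $0$ on the $\mathcal{N}^{+}$-side), the only way the residues can balance globally is if all interior residues vanish, forcing $\mathcal{N}^{+}-C_{g}F^{+}$ to be holomorphic on all of $\mathbb{H}$ and hence to lie in $M_{0}^{!}(\Gamma_{0}(N),\mathbb{Q})$.

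The hardest part is precisely this last step: eliminating the apparent interior poles. It requires a residue-level compatibility argument on $X_{0}(N)$ that weaves together the $\xi_{0}$-vanishing, the Laurent-expansion control at $\infty$ from the preceding lemma, and the prescribed cuspidal principal parts on both sides. I expect the argument to proceed by a direct computation of residues of $\mathcal{N}^{+}$ at the preimages of $0\in E$, paired with the residue sum rule on $X_{0}(N)$.
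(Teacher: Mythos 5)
There is a genuine gap, and it sits exactly at the step you yourself flag as the hardest one. The function $\mathcal{N}^{+}(q)-C_{g}F^{+}(q)$ is \emph{not} a weakly holomorphic modular form in general: $F^{+}$ and $\mathbb{S}(\Lambda)E_{g}(q)$ are holomorphic on all of $\mathbb{H}$, while $\zeta(\Lambda,E_{g}(\tau))$ has genuine poles at every interior point of $X_{0}(N)$ lying over $0\in E$ under the degree-$C_{g}$ map $\phi$, and nothing in the difference can cancel them. The paper's proof of Theorem \ref{thm N-CF bounded} takes this for granted (``a $\Gamma_{0}(N)$-invariant meromorphic function whose pole $\tau_{i}\in\mathbb{H}$ satisfies $j(\tau_{i})\in\bar{\mathbb{Q}}$''), and the whole purpose of the multiplier $\Delta(q)^{m}G(j(q))$ there is to clear these interior poles. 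Your residue argument cannot rescue the cancellation: residues of a meromorphic \emph{function} (as opposed to a $1$-form) are not coordinate-independent on a Riemann surface; even after pairing with a $1$-form, the sum-of-residues constraint is a single linear relation and cannot force every interior residue to vanish; and the poles need not be simple in any case. Note also that the presence of interior poles makes the \emph{archimedean} radius of convergence of the non-principal part strictly less than $1$ in general, so the statement you are aiming at is the wrong one: the lemma concerns the $p$-adic radius of convergence, as its use in Theorem \ref{thm N-CF bounded} makes clear (the roots $\alpha_{i}$ of the distinguished polynomial $h(q)$ satisfy $|\alpha_{i}|<1$ in the $p$-adic absolute value).

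The paper's actual argument is entirely $p$-adic and never needs holomorphy on $\mathbb{H}$: it shows that $D\bigl(\sum_{n\geq 0}b(n)q^{n}\bigr)=\sum_{n\geq 0}nb(n)q^{n}$ lies in $\mathbb{Z}_{p}(\!(q)\!)\otimes\mathbb{Q}_{p}$. For the $F^{+}$-part this is Lemma \ref{lem coeff bound} applied to $D(F^{+})\in M_{2}^{!}(\Gamma_{0}(N),\mathbb{Q})$; for the $\mathcal{N}^{+}$-part one has $D(\mathcal{N}^{+})=-\wp(\Lambda,E_{g}(q))g(q)-\mathbb{S}(\Lambda)g(q)$, and the integrality of $\wp(\Lambda,E_{g}(q))$ comes from the crystalline computation $f^{\ast}(\widehat{\eta}_{0}(t))\in H^{1}_{\text{cris}}(\mathcal{G}/\mathbb{Z}_{p})$, i.e.\ from the fact that $\widehat{\eta}_{0}$ is a differential of the second kind on the formal group. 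Since $v_{p}(n)$ grows only like $\log_{p}n$, a lower bound on $v_{p}(nb(n))$ yields $p$-adic radius of convergence at least $1$ for $\sum b(n)q^{n}$. Your first step (the $\xi_{0}$ computation and the identity $C_{g}=4\pi^{2}\|g\|^{2}/a(\Lambda)$) is correct, but it only reproves the preceding lemma that the difference is a meromorphic modular function with rational $q$-expansion; the crystalline input for the $\wp$-term is what you are missing and cannot be replaced by a global holomorphy claim that is false.
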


\begin{proof}
It is sufficient to show that 
\begin{align*}
D(\sum_{n \geq 0} b(n)q^{n})=\sum_{n \geq 0} nb(n)q^{n} \in \mathbb{Z}_{p}(\!(q)\!) \otimes \mathbb{Q}_{p}.
\end{align*}
In other words, we show that the coefficients $nb(n)$ have bounded denominators.
Firstly, we show that $D(\mathcal{N}^{+}(q)) \in \mathbb{Z}_{p}(\!(q)\!) \otimes \mathbb{Q}_{p}$.
Since 
\begin{align*}
D(\mathcal{N}^{+}(q))=-\wp(\Lambda, E_{g}(q))g(q)-\mathbb{S}(\Lambda)g(q),
\end{align*}
it is enough to show that $\wp(\Lambda, E_{g}(q)) \in \mathbb{Z}_{p}(\!(q)\!) \otimes \mathbb{Q}_{p}$. By Lemma \ref{eta_0} and Theorem \ref{thm isom of formal group}, we have that 
\begin{align*}
f^{\ast}(\widehat{\eta}_{0}(t))=\wp(\Lambda, E_{g}(q))\dfrac{g(q)}{q}+\dfrac{d}{dq}\left(\dfrac{1}{f(q)}\right) \in H^{1}_{\text{cris}}(\mathcal{G}/\mathbb{Z}_{p}).
\end{align*}
Therefore, we obtain that $\wp(\Lambda, E_{g}(q)) \in \mathbb{Z}_{p}(\!(q)\!) \otimes \mathbb{Q}_{p}$ because $f(q) \in \mathbb{Z}_{p}[\![q]\!]^{\times}$.
Next, we show that $D(F^{+}(q)) \in \mathbb{Z}_{p}(\!(q)\!) \otimes \mathbb{Q}_{p}$. It is known that the differential operator $D$ gives the map 
\begin{align*}
D:H_{0} (\Gamma_{0}(N)) \rightarrow M_{2}^{!}(\Gamma_{0}(N)). 
\end{align*}
Hence $D(F^{+}) \in M_{2}^{!}(\Gamma _{0}(N) , \mathbb{Q})$ holds by Theorem \ref{Thm alg of cof}. From Lemma \ref{lem coeff bound}, we have that $D(F^{+}(q)) \in \mathbb{Z}_{p}(\!(q)\!) \otimes \mathbb{Q}_{p}$, and we complete the proof.

\end{proof}

\begin{thm}\label{thm N-CF bounded}
We have that
 \begin{align}\label{eq. bounded}
 \mathcal{N}(q)-C_{g}F(q)=\mathcal{N}^{+}(q)-C_{g}F^{+}(q) \in \mathbb{Z}_{p}(\!(q)\!) \otimes \mathbb{Q}_{p}
 \end{align}
 for all inert prime $p \nmid N$.
\end{thm}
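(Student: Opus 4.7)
The plan is to upgrade the boundedness of $D(\mathcal{N}^{+}(q)-C_{g}F^{+}(q))$ from Lemma \ref{lem. con. rad.} to boundedness of $h(q):=\mathcal{N}^{+}(q)-C_{g}F^{+}(q)$ itself by deriving explicit $p$-adic decompositions of $\mathcal{N}^{+}(q)$ and $F^{+}(q)$ modulo $\mathbb{Z}_{p}(\!(q)\!)\otimes \mathbb{Q}_{p}$ and matching the non-integral pieces. For the $F^{+}$ side, I would apply \cite[Proposition 5]{guerzhoy2014zagier} (equation (1.4) in the introduction) to obtain unique $\lambda_{p},\mu_{p}\in \mathbb{Q}_{p}$ with
\begin{align*}
F^{+}(q)\equiv \lambda_{p}E_{g}(q)+\mu_{p}E_{g\mid V_{p}}(q)\pmod{\mathbb{Z}_{p}(\!(q)\!)\otimes \mathbb{Q}_{p}}.
\end{align*}

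For the $\mathcal{N}^{+}$ side, I would use the formal group machinery of Section 4. By the isomorphism $f\colon \mathcal{G}\to \widehat{E}$ from Theorem \ref{thm isom of formal group} satisfying $\lambda_{\widehat{E}}(f(q))=E_{g}(q)$, the Weierstrass $\zeta$-function factors as $\zeta(\Lambda,E_{g}(q))=1/f(q)-H(f(q))$ where $H$ is the formal primitive of the second-kind differential $\widehat{\eta}_{0}$. Integrating the decomposition $\widehat{\eta}_{0}=A_{1}\widehat{\omega}+A_{2}\omega^{*}$ in $H^{1}_{\text{cris}}(\widehat{E}/\mathbb{Z}_{p})$ of Theorem \ref{thm A_{2} unit} (with $A_{2}\in \mathbb{Z}_{p}^{\times}$) yields $H(t)\equiv A_{1}\lambda_{\widehat{E}}(t)+A_{2}\cdot p^{-1}\lambda_{\widehat{E}}(t^{p})\pmod{\mathbb{Z}_{p}[\![t]\!]}$. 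Substituting $t=f(q)$ and combining the Frobenius congruence $f(q)^{p}\equiv f(q^{p})\pmod{p\mathbb{Z}_{p}[\![q]\!]}$ with the integrality $\lambda_{\widehat{E}}'(t)\in \mathbb{Z}_{p}[\![t]\!]$ (from good reduction at $p$) gives $p^{-1}\lambda_{\widehat{E}}(f(q)^{p})\equiv E_{g\mid V_{p}}(q)\pmod{\mathbb{Z}_{p}[\![q]\!]}$ (since $\lambda_{\widehat{E}}(f(q^{p}))=E_{g}(q^{p})=p\,E_{g\mid V_{p}}(q)$). Putting these together,
\begin{align*}
\mathcal{N}^{+}(q)\equiv -(A_{1}+\mathbb{S}(\Lambda))E_{g}(q)-A_{2}E_{g\mid V_{p}}(q)\pmod{\mathbb{Z}_{p}(\!(q)\!)\otimes \mathbb{Q}_{p}}.
\end{align*}

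Subtracting the two decompositions yields
\begin{align*}
h(q)\equiv \bigl(-(A_{1}+\mathbb{S}(\Lambda))-C_{g}\lambda_{p}\bigr)E_{g}(q)+(-A_{2}-C_{g}\mu_{p})E_{g\mid V_{p}}(q)\pmod{\mathbb{Z}_{p}(\!(q)\!)\otimes \mathbb{Q}_{p}},
\end{align*}
and the theorem reduces to showing that both $\mathbb{Q}_{p}$-coefficients on the right vanish. The main obstacle is precisely this step: the preceding Lemma only gives $h(q)\in \mathbb{Q}(\!(q)\!)$ and Lemma \ref{lem. con. rad.} only gives boundedness of $Dh$, so neither condition on its own forces the coefficients of $E_{g}$ and $E_{g\mid V_{p}}$ to vanish. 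The two coefficients should be extracted as the $p$-adic limits $\lim_{m\to \infty}(-p)^{m}[h]_{p^{2m}}$ and $\lim_{m\to \infty}(-p)^{m+1}[h]_{p^{2m+1}}$, using the inert recursion $a_{p^{2m}}=(-p)^{m}$, $a_{p^{2m+1}}=0$ (so that the coefficient of $E_{g}$ at $q^{p^{2m}}$ is $(-1)^{m}/p^{m}$ and that of $E_{g\mid V_{p}}$ at $q^{p^{2m+1}}$ is $(-1)^{m}/p^{m+1}$). Combining the rationality of $h(q)$ with the uniqueness of Guerzhoy's decomposition should force these limits to be zero, giving the identities $C_{g}\lambda_{p}=-(A_{1}+\mathbb{S}(\Lambda))$ and $C_{g}\mu_{p}=-A_{2}$ and completing the proof.
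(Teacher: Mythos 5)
Your reduction of the theorem to the vanishing of the two $\mathbb{Q}_{p}$-coefficients of $E_{g}(q)$ and $E_{g\mid V_{p}}(q)$ is correct, and both preliminary decompositions (Guerzhoy's for $F^{+}$, and the crystalline one for $\mathcal{N}^{+}$, which is essentially \eqref{zeta bounded} up to the unit $e$ relating $f^{\ast}(\omega^{\ast})$ to $\omega_{g}^{\ast}$ that you drop) are sound. But the final step is a genuine gap, and it is precisely the entire content of the theorem. Neither fact you invoke can close it: rationality of $h(q)=\mathcal{N}^{+}(q)-C_{g}F^{+}(q)$ says nothing about $p$-adic boundedness (the series $E_{g}(q)$ is itself in $\mathbb{Q}(\!(q)\!)$ with unbounded denominators, its $p^{2m}$-th coefficient being $(-1)^{m}p^{-m}$), and the uniqueness in Proposition \ref{prop. ramuda} only pins down $\lambda_{p},\mu_{p}$ for $F^{+}$; it imposes no constraint on $h$. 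Your limit formulas do extract the coefficients correctly, but evaluating them to $0$ requires a bound $v_{p}([h]_{p^{2m+1}})>-m-1$ for large $m$, i.e.\ bounded denominators of $h$ along $p$-power indices --- which is what you are trying to prove. Worse, the identity you need for the $E_{g\mid V_{p}}$-coefficient, $C_{g}\mu_{p}=-eA_{2}$, is via Theorem \ref{mu=alpha} exactly the assertion $C_{g}\alpha_{g}=-eA_{2}$ of Theorem \ref{main_THM}, which the paper \emph{deduces from} Theorem \ref{thm N-CF bounded}. So the route is circular: to prove boundedness of $h$ this way you would first have to prove the main theorem without it.

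The paper's actual proof is of a different nature and does not touch the coefficients $\lambda_{p},\mu_{p},A_{1},A_{2}$ at all. It uses that $\mathcal{N}^{+}(q)-C_{g}F^{+}(q)$ is a $\Gamma_{0}(N)$-invariant meromorphic function whose poles have algebraic $j$-invariants, so that after multiplying by $\Delta(q)^{m}G(j(q))$ one lands in $S_{k}(\Gamma_{0}(N),K)$; applying the Weierstrass preparation theorem over $\mathcal{O}_{K}[\![q]\!]$ to both this product and to $\Delta(q)^{m}G(j(q))$ expresses $h(q)$, up to powers of $\pi$ and $q$ and units of $\mathcal{O}_{K}[\![q]\!]$, as a ratio $P(q)/h_{0}(q)$ of distinguished polynomials. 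A nontrivial root of the denominator would force the radius of convergence of the non-principal part to be strictly less than $1$, contradicting Lemma \ref{lem. con. rad.} --- and that lemma is where the crystalline computation of $\wp(\Lambda,E_{g}(q))$ and the boundedness of $D(F^{+})$ actually enter. In short, the paper converts boundedness into a statement about the absence of poles in the open unit disc and kills those poles with the radius-of-convergence bound, whereas you try to compute the unbounded part explicitly and show it vanishes; the latter cannot be done at this stage of the paper without assuming its main result.
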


P. Guerzhoy showed that \eqref{eq. bounded} holds for almost all prime number $p$. (cf. \cite[Proposition 2]{guerzhoy2014zagier}.) We refine the proof of \cite[Proposition 2]{guerzhoy2014zagier}, and we show that \eqref{eq. bounded} holds for all inert prime $p \nmid N$.

\begin{proof}
By the proof of {\cite[Proposition 2]{guerzhoy2014zagier}}, $\mathcal{N}^{+}(q)-C_{g}F^{+}(q)$ is a $\Gamma_{0}(N)$-invariant meromorphic function whose pole $\tau_{i} \in \mathbb{H}$ satisfies that $j(\tau_{i}) \in \bar{\mathbb{Q}}$. Let $K$ be a number field contains all values $j(\tau_{i})$ and $\pi$ be a uniformizer of $\mathcal{O}_{K}$. Then there exists a monic polynomial $G(X) \in K[X]$ and $m \in \mathbb{N}$ such that 
\begin{align*}
\Delta(q)^{m}G(j(q))({N}^{+}(q)-C_{g}F^{+}(q)) \in S_{k}(\Gamma_{0}(N), K).
\end{align*}
Therefore, 
\begin{align*}
\pi^{k_{1}}q^{l_{1}}\Delta(q)^{m}G(j(q))({N}^{+}(q)-C_{g}F^{+}(q)) \in \mathcal{O}_{K}[\![q]\!]
\end{align*}
whose all coefficients are not in $\pi\mathcal{O}_{K}$ for some $k_{1}, l_{1} \in \mathbb{N}$.
By Weierstrass preparation theorem, there exists a distinguished polynomial $P(q)$ and a unit $u_{1}(q) \in \mathcal{O}_{K}[\![q]\!]$ such that 
\begin{align}\label{eq. u_1}
\pi^{k_{1}}q^{l_{1}}\Delta(q)^{m}G(j(q))({N}^{+}(q)-C_{g}F^{+}(q))=P(q)u_{1}(q)
\end{align}
holds.
Similarly, there exists a distinguished polynomial $h(q)$ that is coprime to $P(q)$ and a unit $u_{2}(q) \in \mathcal{O}_{K}[\![q]\!]$ such that
\begin{align}\label{eq. u_2}
\pi^{k_{2}}q^{l_{2}}\Delta(q)^{m}G(j(q))=h(q)u_{2}(q)
\end{align}
holds for some $k_{2}, l_{2} \in \mathbb{N}$. Combining \eqref{eq. u_1} and \eqref{eq. u_2}, we obtain that
\begin{align}\label{eq. dis. pol.}
{N}^{+}(q)-C_{g}F^{+}(q)=\pi^{k_{2}-k_{1}}q^{l_{2}-l_{1}}\dfrac{P(q)}{h(q)}u_{1}(q)u_{2}(q)^{-1}.
\end{align}
Hence, it is enough to show that $h(q)$ has no nonzero root to complete the proof.
Suppose that $h(q)$ has a nonzero root. We denote the nonzero roots of $h(q)$ by $\alpha_{i}$. Since $h(q)$ is a distinguished polynomial, each $\alpha_{i}$ satisfies that $|\alpha_{i}| < 1$.
Therefore, the radius of convergence of the non-principal part of \eqref{eq. dis. pol.} is equal to $\min_{i} |\alpha_{i}| < 1$ . This is a contradiction to Lemma \ref{lem. con. rad.}.
\end{proof}

\begin{proposition}[{\cite[Proposition 5]{guerzhoy2014zagier}}]\label{prop. ramuda}
There exist unique $p$-adic numbers $\lambda_{p}, \mu_{p} \in \mathbb{Q}_{p}$ such that
\begin{align*}
F^{+}(q)-\lambda_{p}E_{g}(q)-\mu_{p}E_{g\mid V_{p}}(q) \in \mathbb{Z}_{p}(\!(q)\!) \otimes \mathbb{Q}_{p}
\end{align*}
for all prime number $p \nmid N$. 
\end{proposition}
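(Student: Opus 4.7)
The plan is to reduce to the Weierstrass mock modular form via Theorem~\ref{thm N-CF bounded} and then decompose it using the formal group machinery of Section~4. By Theorem~\ref{thm N-CF bounded}, $\mathcal{N}^+(q) - C_g F^+(q) \in \mathbb{Z}_p(\!(q)\!) \otimes \mathbb{Q}_p$, so it suffices to find $\lambda_p, \mu_p \in \mathbb{Q}_p$ with
\[
\mathcal{N}^+(q) - C_g\lambda_p E_g(q) - C_g\mu_p E_{g|V_p}(q) \in \mathbb{Z}_p(\!(q)\!) \otimes \mathbb{Q}_p.
\]
I will treat the inert case, which is what is required in the sequel and where the formal group apparatus of Section~4 applies.

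The key tool is the isomorphism $f\colon \mathcal{G}\to\widehat{E}$ from Theorem~\ref{thm isom of formal group}. Since the Frobenius action on $H^1_{\text{cris}}$ is natural under isomorphisms of formal groups over $\mathbb{Z}_p$, the pullback $f^*$ sends the basis $\{\widehat{\omega},\omega^*\}$ of $H^1_{\text{cris}}(\widehat{E}/\mathbb{Z}_p)$ to a basis of $H^1_{\text{cris}}(\mathcal{G}/\mathbb{Z}_p)$ whose formal primitives, modulo $\mathbb{Z}_p[\![q]\!]$, are $E_g(q)$ and $E_{g|V_p}(q)$ respectively. Using Lemma~\ref{eta_0} and the antiderivative identity $-\zeta'(\Lambda,z) = \wp(\Lambda,z)$, the pullback $f^*\widehat{\eta}_0$ has formal primitive $-\zeta(\Lambda, E_g(q)) + 1/f(q)$ up to an additive constant. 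Theorem~\ref{thm A_{2} unit} gives $\widehat{\eta}_0 - A_1\widehat{\omega} - A_2\omega^* = d\psi$ for some $\psi(t) \in \mathbb{Z}_p[\![t]\!]$; pulling back and integrating yields
\[
-\zeta(\Lambda, E_g(q)) + \frac{1}{f(q)} - A_1 E_g(q) - A_2 E_{g|V_p}(q) - \psi(f(q)) \in \mathbb{Q}_p.
\]
Since $\psi(f(q)) \in \mathbb{Z}_p[\![q]\!]$, $1/f(q) \in q^{-1}\mathbb{Z}_p[\![q]\!]$, and $\mathbb{S}(\Lambda) \in \mathbb{Q}$ by the CM assumption, rearranging gives
\[
\mathcal{N}^+(q) + (A_1 + \mathbb{S}(\Lambda)) E_g(q) + A_2 E_{g|V_p}(q) \in \mathbb{Z}_p(\!(q)\!) \otimes \mathbb{Q}_p,
\]
so existence holds with $\lambda_p := -(A_1 + \mathbb{S}(\Lambda))/C_g$ and $\mu_p := -A_2/C_g$.

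For uniqueness, suppose $\lambda E_g(q) + \mu E_{g|V_p}(q) \in \mathbb{Z}_p(\!(q)\!) \otimes \mathbb{Q}_p$. The Hecke recursion $a_g(p^{n+1}) = -p\,a_g(p^{n-1})$ together with $a_g(p) = 0$ (as $p$ is inert in $\mathcal{O}_K$) yields $a_g(p^{2m}) = (-p)^m$ and $a_g(p^{2m+1}) = 0$, so the $q^{p^{2m}}$-coefficient equals $(-1)^m \lambda/p^m$, whose valuation $v_p(\lambda) - m$ is unbounded below as $m \to \infty$ unless $\lambda = 0$; examining the $q^{p^{2m+1}}$-coefficient $(-1)^m \mu/p^{m+1}$ of the remaining term $\mu E_{g|V_p}(q)$ then forces $\mu = 0$. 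The main obstacle I anticipate is the careful verification of Frobenius naturality under $f^*$ on $H^1_{\text{cris}}$, which ensures that $f^*\omega^*$ admits $E_{g|V_p}(q)$ as a primitive modulo $\mathbb{Z}_p[\![q]\!]$; this rests on $f \in \mathbb{Z}_p[\![q]\!]$ and on the matching Honda types established in Proposition~\ref{G type} and Theorem~\ref{E type}. Once this compatibility is in place, everything else reduces to formal $q$-expansion manipulations.
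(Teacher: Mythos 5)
Your argument takes a genuinely different route from the one the paper relies on. The paper imports this proposition wholesale from Guerzhoy, whose proof (rehearsed inside the proof of Theorem~\ref{mu=alpha}) works with the operators $U_{p}$, $V_{p}$: one forms $\mathcal{G}=F^{+}-\beta^{-1}F^{+}\mid V_{p}$ and $\mathcal{G}'=F^{+}-\beta'^{-1}F^{+}\mid V_{p}$, shows that $\beta'^{l}\mathcal{G}\mid U_{p}^{l}$ and $\beta^{l}\mathcal{G}'\mid U_{p}^{l}$ converge $p$-adically to explicit multiples of $E_{g}(q)-\beta^{-1}E_{g}(q^{p})$ and $E_{g}(q)-\beta'^{-1}E_{g}(q^{p})$, and reads off $\lambda_{p},\mu_{p}$ from the limits $u,v$; this works uniformly for every $p\nmid N$. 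You instead pass through $\mathcal{N}^{+}$ via Theorem~\ref{thm N-CF bounded} and decompose $\widehat{\eta}_{0}$ in $H^{1}_{\mathrm{cris}}(\widehat{E}/\mathbb{Z}_{p})$, which is precisely the computation the paper performs in Section~\ref{proof section} (equation \eqref{zeta bounded}) for the different purpose of evaluating $\mu_{p}=\alpha_{g}$; there is no circularity in doing so, since neither Theorem~\ref{thm N-CF bounded} nor the crystalline material depends on this proposition. Your uniqueness argument via the coefficients at $q^{p^{2m}}$ and $q^{p^{2m+1}}$ is correct and cleaner than anything written in the paper. One small imprecision: by Frobenius compatibility $f^{\ast}(\omega^{\ast})$ equals $e\,\frac{d}{dq}E_{g\mid V_{p}}(q)$ only up to a $p$-adic unit $e$, so your formula should read $\mu_{p}=-eA_{2}/C_{g}$; this is harmless for existence. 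What your approach buys is a self-contained proof that simultaneously pins down $C_{g}\mu_{p}$ up to a unit, i.e.\ essentially the main theorem.

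The genuine gap is one of scope: the proposition is asserted for \emph{all} primes $p\nmid N$, while your method covers only inert $p$, and even then only under the additional standing hypotheses of Section~4 ($p\geq 5$ and good reduction of the Weierstrass model \eqref{shiki} at $p$). For split (ordinary) primes the formal group $\widehat{E}$ has height $1$, Proposition~\ref{prop. cris basis} gives a rank-one $H^{1}_{\mathrm{cris}}$, the logarithm is not of Honda type $X^{2}+p$ since $a_{g}(p)\neq 0$, and $E_{g\mid V_{p}}$ does not arise as the primitive of $\omega^{\ast}$; your uniqueness computation also uses $a_{g}(p)=0$. So as written you prove a strictly weaker statement than the one claimed: either restrict the proposition to inert $p$ with good reduction (which suffices for the paper's main theorem) or handle the remaining primes by Guerzhoy's operator argument.
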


We give a characterization of the $p$-adic constant $\alpha_{g}$ by using Proposition \ref{prop. ramuda}.

\begin{thm}\label{mu=alpha}
Let $p$ be a prime number satisfying that inert in $\mathcal{O}_{K}$ and $p \nmid N$.
If there exist $p$-adic numbers $\lambda_{p}, \mu_{p} \in \mathbb{Q}_{p}$ such that
\begin{align*}
F^{+}(q)-\lambda_{p}E_{g}(q)-\mu_{p}E_{g\mid V_{p}}(q) \in \mathbb{Z}_{p}(\!(q)\!) \otimes \mathbb{Q}_{p},
\end{align*}
then $\lambda_{p}=0$ and $\mu_{p}=\alpha_{g}$. 
\end{thm}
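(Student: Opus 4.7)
The approach is purely coefficient extraction: since $g$ has CM by $K$ and $p$ is inert, the Hecke recursion gives $a_g(p)=0$, hence $a_g(p^{2m+1})=0$ and $a_g(p^{2m})=(-p)^m$ for every $m\ge 0$. Moreover $\beta$ is a root of $X^2+p$, so $\beta^{2m}=(-p)^m$. I will use this to read off both $\mu_p$ and $\lambda_p$ from the Fourier coefficients of the hypothesized series at the indices $p^{2m+1}$ and $p^{2m}$, respectively. Throughout I write the difference $F^{+}-\lambda_p E_g-\mu_p E_{g\mid V_p}=\sum_n c_n q^n$, and fix a constant $C$ with $v_p(c_n)\ge -C$ for all $n$, which exists by the hypothesis.

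\emph{Determination of $\mu_p$.} I would first isolate the coefficient at $n=p^{2m+1}$. Because $a_g(p^{2m+1})=0$, the term coming from $\lambda_p E_g$ vanishes; the term from $\mu_p E_{g\mid V_p}$ is $\mu_p\cdot (p\cdot p^{2m})^{-1}a_g(p^{2m})=\mu_p(-1)^m p^{-(m+1)}$. The boundedness $v_p(c_{p^{2m+1}})\ge -C$ then translates into
\begin{equation*}
v_p\bigl((-1)^m p^{m+1}a^{+}(p^{2m+1})-\mu_p\bigr)\ge m+1-C,
\end{equation*}
so $(-1)^m p^{m+1}a^{+}(p^{2m+1})\to\mu_p$ in $\mathbb{Q}_p$. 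On the other hand, since $k=2$ and $D=q\,d/dq$, we have $a_{D(F)}(p^{2m+1})=p^{2m+1}a^{+}(p^{2m+1})$, and $\beta^{2m}=(-p)^m$, so Theorem~\ref{def alpha_g} rewrites as $\alpha_g=\lim_m (-1)^m p^{m+1}a^{+}(p^{2m+1})$. Comparing the two limits yields $\mu_p=\alpha_g$.

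\emph{Determination of $\lambda_p$.} Next I would extract the coefficient at $n=p^{2m}$. Now the term from $\mu_p E_{g\mid V_p}$ involves $a_g(p^{2m-1})=0$ and vanishes, while the term from $\lambda_p E_g$ equals $\lambda_p(-1)^m p^{-m}$. The same boundedness argument gives
\begin{equation*}
v_p\bigl((-1)^m p^{m}a^{+}(p^{2m})-\lambda_p\bigr)\ge m-C,
\end{equation*}
so $(-1)^m p^{m}a^{+}(p^{2m})\to\lambda_p$. But $a_{D(F)}(p^{2m})/\beta^{2m}=(-1)^m p^{m}a^{+}(p^{2m})$, and Proposition~\ref{prop. lim=0} tells us this sequence tends to $0$. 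Hence $\lambda_p=0$.

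\emph{Main obstacle.} There is no real analytic obstacle; everything reduces to the two CM identities $a_g(p^{2m+1})=0$, $a_g(p^{2m})=(-p)^m$ and making sure the powers of $p$ coming from the $n^{-1}$ in the Eichler integrals align with the $\beta^{2m}=(-p)^m$ in the definition of $\alpha_g$ and with the $p^{2m}$ produced by the operator $D$. The one place where care is needed is verifying that the coefficient of $E_{g\mid V_p}$ at $p^{2m}$ (rather than at $p^{2m+1}$) genuinely vanishes; this is what separates the two sparse families of indices and lets me decouple $\lambda_p$ from $\mu_p$.
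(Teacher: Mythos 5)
Your proof is correct, and it takes a genuinely more direct route than the paper's. The paper follows Guerzhoy: it forms $\mathcal{G}=F^{+}-\beta^{-1}F^{+}\mid V_{p}$ and $\mathcal{G}'=F^{+}-\beta'^{-1}F^{+}\mid V_{p}$, invokes the convergence of $\beta'^{l}\mathcal{G}\mid U_{p}^{l}$ and $\beta^{l}\mathcal{G}'\mid U_{p}^{l}$ to multiples $u$, $v$ of $E_{g}(q)-\beta^{-1}E_{g}(q^{p})$ and $E_{g}(q)-\beta'^{-1}E_{g}(q^{p})$, imports from the proof of \cite[Proposition 5]{guerzhoy2014zagier} the identities $\lambda_{p}=\frac{u+v}{2}$ and $\mu_{p}=\frac{\beta(u-v)}{2}$, and then evaluates $u$ and $v$ by reading off first Fourier coefficients --- which reduces to exactly the two limits $\lim_{m}a_{D(F)}(p^{2m+1})/\beta^{2m}=\alpha_{g}$ (Theorem \ref{def alpha_g}) and $\lim_{m}a_{D(F)}(p^{2m})/\beta^{2m}=0$ (Proposition \ref{prop. lim=0}). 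You bypass the $U_{p}$-limit machinery entirely: the CM identities $a_{g}(p^{2m+1})=0$ and $a_{g}(p^{2m})=(-p)^{m}$ make the contributions of $E_{g}$ and $E_{g\mid V_{p}}$ live on disjoint sparse families of exponents, so the boundedness hypothesis at $n=p^{2m+1}$ forces $\mu_{p}=\lim_{m}(-1)^{m}p^{m+1}a^{+}(p^{2m+1})=\alpha_{g}$ and at $n=p^{2m}$ forces $\lambda_{p}=\lim_{m}(-1)^{m}p^{m}a^{+}(p^{2m})=0$, citing the same two external facts. Your computations of the relevant coefficients and of the normalization $\beta^{2m}=(-p)^{m}$, $a_{D(F)}(n)=na^{+}(n)$ all check out. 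Your argument is more elementary and self-contained (it does not rely on the formulas for $\lambda_{p},\mu_{p}$ in terms of $u,v$, which the paper quotes without proof), and it makes the uniqueness of the pair $(\lambda_{p},\mu_{p})$ transparent; what it gives up is only the structural interpretation of $(\lambda_{p},\mu_{p})$ as coordinates of $F^{+}$ with respect to the $\beta,\beta'$-eigendecomposition, which the paper's route exhibits.
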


\begin{proof}
From Proposition \ref{prop. ramuda}, it is sufficient to show that $\lambda_{p}=0, \mu_{p}=\alpha_{g}$ when $p$ is inert in $\mathcal{O}_{K}$.
We assume that $p$ is inert in $\mathcal{O}_{K}$ .
It is known that there exists a weakly holomorphic modular form $R_{p} \in M_{0}^{!}( \Gamma _{0}(N), \mathbb{Q})$ such that
\begin{align*}
F^{+}\mid T_{0, p}=R_{p}.
\end{align*}
(See the proof of \cite[Theorem 1.3]{bruinier2008differential}.)
Let $\beta, \beta'$ be the roots of the polynomial $X^{2}-a_{g}(p)X+p=X^{2}+p$. 
We define two $q$-series $\mathcal{G}, \mathcal{G}'$ by
\begin{align*}
\mathcal{G}:=F^{+}-\beta^{-1}F^{+}\mid V_{p}, \quad \mathcal{G}':=F^{+}-\beta'^{-1}F^{+}\mid V_{p}.
\end{align*}
P. Guerzhoy \cite{guerzhoy2014zagier} showed that
$
\lim _{l\rightarrow \infty }\beta'^{l}\mathcal{G}\mid U_{p}^{l}$ and  $\lim _{l\rightarrow \infty }\beta^{l}\mathcal{G}'\mid U_{p}^{l}$
are convergent, and there exist $u, v \in \mathbb{Q}_{p}(\sqrt{-p})$ such that
\begin{align*}
\lim _{l\rightarrow \infty }\beta'^{l}\mathcal{G}\mid U_{p}^{l}=u\left(E_{g}(q)-\beta^{-1}E_{g}(q^{p})\right), \quad \lim _{l\rightarrow \infty }\beta^{l}\mathcal{G}'\mid U_{p}^{l}=v\left(E_{g}(q)-\beta'^{-1}E_{g}(q^{p})\right).
\end{align*}
In the proof of \cite[Proposition 5]{guerzhoy2014zagier}, P. Guerzhoy \cite{guerzhoy2014zagier} gives that 
\begin{align*}
\lambda_{p}=\dfrac{u+v}{2}, \quad \mu_{p}=p\dfrac{v-u}{2\beta}=\dfrac{\beta(u-v)}{2}.
\end{align*}
We show that $u=\beta^{-1}\alpha_{g}, v=-u$.
Let $a_{l}(1)$ be the first Fourier coefficient of $\beta'^{l}\mathcal{G}\mid U_{p}^{l}$.
Then we have that
\begin{align*}
a_{l}(1)=\beta'^{l}\left(a^{+}(p^{l})-\beta^{-1}a^{+}(p^{l-1})\right)=\dfrac{a_{D(F)}(p^{l})}{\beta^{l}}+\dfrac{a_{D(F)}(p^{l-1})}{\beta^{l-1}}
\end{align*}
where $a_{D(F)}(n)=na^{+}(n)$ is the $n$-th Fourier coefficients of $D(F)$.
By Theorem \ref{def alpha_g} and Proposition \ref{prop. lim=0}, we obtain that
\begin{align*}
u=\lim _{l\rightarrow \infty }a_{l}(1)=\lim _{l\rightarrow \infty }a_{2l}(1)=\beta^{-1}\alpha_{g}.
\end{align*}
Similarly, let $b_{l}(1)$ be the first Fourier coefficient of $\beta'^{l}\mathcal{G}\mid U_{p}^{l}$.
Then we have that
\begin{align*}
b_{2l}(1)=\beta^{2l}\left(a^{+}(p^{2l})-\beta'^{-1}a^{+}(p^{2l-1})\right)=\dfrac{a_{D(F)}(p^{2l})}{\beta^{2l}}-\dfrac{a_{D(F)}(p^{2l-1})}{\beta^{2l-1}}.
\end{align*}
Therefore, 
\begin{align*}
v=\lim _{l\rightarrow \infty }b_{l}(1)=\lim _{l\rightarrow \infty }b_{2l}(1)=-\beta^{-1}\alpha_{g}=-u
\end{align*}
holds.
Hence, we conclude that
\begin{align*}
\lambda_{p}=0, \quad \mu_{p}=\dfrac{\beta(u-v)}{2}=\beta u=\alpha_{g}.
\end{align*}
\end{proof}

\section{Proof of Main Theorem}\label{proof section}
Let $g \in S_{2}(\Gamma_{0}(N), \mathbb{Z})$ be a normalized newform with complex multiplication by an imaginary quadratic field $K$ and $p$ be inert in $\mathcal{O}_{K}$ and $p \geq 5$. We assume that the Weierstrass model \eqref{shiki} has good reduction at $p$. 

We calculate a basis of $H^{1}_{\text{cris}}(\mathcal{G}/\mathbb{Z}_{p})$.
Let $\phi$ be the Frobenius morphism on $H^{1}_{\text{cris}}(\mathcal{G}/\mathbb{Z}_{p})$ then we have that
\begin{align*}
\omega_{g}^{\ast}(q)&=p^{-1}\phi\left(\sum ^{\infty }_{n=1}a_{g}(n)q^{n-1}dq\right)
\\
&=\dfrac{d}{dq}\left(\sum _{n>0}\dfrac{a_{g}(n)}{pn} q^{pn}\right)
\\
&=\dfrac{d}{dq}E_{g\mid V_{p}}(q).
\end{align*}

Let $f(q)$ be the formal power series defined by $f(q)=\exp_{\widehat{E}} \circ \log_{\mathcal{G}}$.
Next, we calculate the image of $\widehat{\eta}_{0}(t)$ by $f^{\ast}$.

\begin{lem}
Let $\widehat{\eta}_{0}(t)$ be the differential form given by
 \begin{align*}
 \widehat{\eta}_{0}(t):=\widehat{\eta}(t)-\dfrac{dt}{t^{2}} \in H^{1}_{\text{cris}}(\widehat{E}/\mathbb{Z}_{p}).
 \end{align*}
 Then the equation
\begin{align*}
f^{\ast}(\widehat{\eta}_{0}(t))=\dfrac{d}{dq}\left(-\zeta(\Lambda, E_{g}(q))+\dfrac{1}{f(q)}\right)
\end{align*}
holds.
\end{lem}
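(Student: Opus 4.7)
The plan is to unwind the definitions of the pullback, use the key relation $\lambda(f(q))=E_g(q)$ that comes from $f=\exp_{\widehat{E}}\circ \log_{\mathcal{G}}$ together with $\log_{\mathcal{G}}=E_g$, and then recognize the resulting expression as an exact differential via the classical identity $\zeta'(\Lambda,z)=-\wp(\Lambda,z)$.

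First, I would record the power-series form of $\widehat{\eta}$: from $\widehat{\eta}(t)=\wp(\Lambda,z)\,dz\big|_{z=\lambda(t)}$ we get $\widehat{\eta}(t)=\wp(\Lambda,\lambda(t))\,\lambda'(t)\,dt$. Next I would observe that, by Theorem~\ref{thm isom of formal group} and the defining relation $f=\exp_{\widehat{E}}\circ\log_{\mathcal{G}}$, one has $\lambda(f(q))=\log_{\widehat{E}}(f(q))=\log_{\mathcal{G}}(q)=E_g(q)$. Differentiating this identity in $q$ yields
\begin{align*}
\lambda'(f(q))\,f'(q)=E_g'(q)=\frac{g(q)}{q},
\end{align*}
where the last equality uses $E_g(q)=\sum_{n\ge1}(a_g(n)/n)\,q^n$.

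Now I would compute the pullbacks term by term. For the $\widehat{\eta}$ piece,
\begin{align*}
f^{\ast}\widehat{\eta}(t)=\wp(\Lambda,\lambda(f(q)))\,\lambda'(f(q))\,f'(q)\,dq=\wp(\Lambda,E_g(q))\,\frac{g(q)}{q}\,dq.
\end{align*}
Using the classical identity $\frac{d}{dz}\zeta(\Lambda,z)=-\wp(\Lambda,z)$ and the chain rule,
\begin{align*}
\frac{d}{dq}\bigl(-\zeta(\Lambda,E_g(q))\bigr)=\wp(\Lambda,E_g(q))\,E_g'(q)=\wp(\Lambda,E_g(q))\,\frac{g(q)}{q},
\end{align*}
so $f^{\ast}\widehat{\eta}(t)=\frac{d}{dq}\bigl(-\zeta(\Lambda,E_g(q))\bigr)dq$. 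For the second piece, $f^{\ast}(dt/t^2)=f'(q)/f(q)^2\,dq=-\frac{d}{dq}\!\left(1/f(q)\right)dq$, hence
\begin{align*}
-f^{\ast}\!\left(\frac{dt}{t^{2}}\right)=\frac{d}{dq}\!\left(\frac{1}{f(q)}\right)dq.
\end{align*}
Adding the two contributions gives exactly the claimed formula.

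There is no real obstacle here; this lemma is a direct computation. The only points that deserve care are (i) verifying that $f$ is a valid substitution on the level of formal differentials, which is legitimized by Theorem~\ref{thm isom of formal group} (so $f\in\mathbb{Z}_p[\![q]\!]$ with $f'(0)=1$ and $1/f(q)$ makes sense as an element of $q^{-1}\mathbb{Z}_p[\![q]\!]$), and (ii) being careful with signs in the identity $\zeta'=-\wp$ and in the exact form $d(1/f)=-f'/f^{2}\,dq$. Once these are in place, the identity is a one-line consequence of the chain rule.
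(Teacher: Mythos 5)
Your proof is correct and follows essentially the same route as the paper's own argument: pull back $\widehat{\eta}(t)=\wp(\Lambda,z)\,dz\mid_{z=\lambda(t)}$ using $\lambda(f(q))=E_{g}(q)$ and the identity $\zeta'(\Lambda,z)=-\wp(\Lambda,z)$, then handle $dt/t^{2}$ as the exact form $-d(1/f)$. You simply spell out the chain-rule steps that the paper leaves implicit, so no further comment is needed.
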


\begin{proof}
Since $\widehat{\eta}(t)=\wp(\Lambda, z)dz \mid_{z=\lambda(t)}$, we have that
\begin{align*}
f^{\ast}(\widehat{\eta}(t))
&=\wp(\Lambda, z)dz \mid_{z=\lambda(f(q))}
\\&=\wp(\Lambda, z)dz \mid_{z=E_{g}(q)}
\\&=-\dfrac{d}{dq}\zeta(\Lambda, E_{g}(q)).
\end{align*}
It is clear that $f^{\ast}(\dfrac{dt}{t^{2}})=-\dfrac{d}{dq}\left(\dfrac{1}{f(q)}\right)$, and the claim follows from this. 
\end{proof}
Since the isomorphism $f^{\ast} : H^{1}_{\text{cris}}(\widehat{E}/\mathbb{Z}_{p}) \rightarrow H^{1}_{\text{cris}}(\mathcal{G}/\mathbb{Z}_{p})$ is compatible with the Frobenius morphism, there is a $p$-adic unit $e$ such that 
\begin{align}\label{e}
f^{\ast}(\omega^{\ast})=e\omega_{g}^{\ast}=e\dfrac{d}{dq}E_{g\mid V_{p}}(q).
\end{align}
We denote $\widehat{\eta}_{0}(t)$ by
\begin{align*}
\widehat{\eta}_{0}(t)=A_{1}\widehat{\omega}(t)+A_{2}\omega^{\ast}(t) \text{ in } H^{1}_{\text{cris}}(\widehat{E}/\mathbb{Z}_{p}).
\end{align*}
Then, we obtain that
\begin{align}\label{eq daizi}
f^{\ast}(\widehat{\eta}_{0}(t))=\dfrac{d}{dq}\left(-\zeta(\Lambda, E_{g}(q))+\dfrac{1}{f(q)}\right)=A_{1}\dfrac{d}{dq}E_{g}(q)+eA_{2}\dfrac{d}{dq}E_{g\mid V_{p}}(q).
\end{align}
From this equation \eqref{eq daizi}, there exists a formal power series $F(q) \in \mathbb{Z}_{p}[\![q]\!]$ such that
\begin{align*}
-\zeta(\Lambda, E_{g}(q))=A_{1}E_{g}(q)+eA_{2}E_{g\mid V_{p}}(q)+\dfrac{1}{f(q)}+F(q).
\end{align*}
Since $f(q) \in \mathbb{Z}_{p}[\![q]\!]^{\times}$, we conclude that
\begin{align}\label{zeta bounded}
\zeta(\Lambda, E_{g}(q))+A_{1}E_{g}(q)+eA_{2}E_{g\mid V_{p}}(q) \in \mathbb{Z}_{p}(\!(q)\!) \otimes \mathbb{Q}_{p}.
\end{align}
We note that $eA_{2}$ is a $p$-adic unit by \eqref{e} and Theorem \ref{thm A_{2} unit}.
We prove the main theorem.

\begin{thm}\label{main_THM}
Let $g \in S_{2}(\Gamma_{0}(N))$ be a normalized newform with complex multiplication by an imaginary quadratic field $K$ with rational Fourier coefficients. We assume that $p \geq 5$ and the Weierstrass model \eqref{shiki} has good reduction at $p$.
If $p$ is inert in $\mathcal{O}_{K}$, then $C_{g}\alpha_{g}$ is a $p$-adic unit. 
\end{thm}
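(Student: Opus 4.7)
The plan is to combine equation \eqref{zeta bounded} from the preceding discussion, which gives a $p$-adic decomposition of $\zeta(\Lambda, E_{g}(q))$ modulo bounded coefficients, with Theorem \ref{thm N-CF bounded} relating the Weierstrass mock modular form to $C_{g}F^{+}$, and then invoke the uniqueness part of Proposition \ref{prop. ramuda} together with Theorem \ref{mu=alpha}. I expect no serious obstacle: the substance of the argument (Honda theory, the fact that $eA_{2}$ is a $p$-adic unit, the identification $\mu_{p}=\alpha_{g}$, the boundedness of $\mathcal{N}^{+}-C_{g}F^{+}$) has already been assembled, so the theorem should follow by stringing these inputs together.

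\textbf{Step 1.} Rewrite \eqref{zeta bounded} using the definition
\begin{equation*}
\mathcal{N}^{+}(q)=\zeta(\Lambda,E_{g}(q))-\mathbb{S}(\Lambda)E_{g}(q).
\end{equation*}
This substitution converts \eqref{zeta bounded} into
\begin{equation*}
\mathcal{N}^{+}(q)+\bigl(\mathbb{S}(\Lambda)+A_{1}\bigr)E_{g}(q)+eA_{2}E_{g\mid V_{p}}(q)\in\mathbb{Z}_{p}(\!(q)\!)\otimes\mathbb{Q}_{p},
\end{equation*}
which is a decomposition of $\mathcal{N}^{+}(q)$ with coefficients in $\mathbb{Q}_{p}$ (recall that $\mathbb{S}(\Lambda)\in\mathbb{Q}$ since $E$ has CM, while $A_{1},A_{2}\in\mathbb{Z}_{p}$ and $e\in\mathbb{Z}_{p}^{\times}$).

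\textbf{Step 2.} Apply Theorem \ref{thm N-CF bounded} to replace $\mathcal{N}^{+}(q)$ by $C_{g}F^{+}(q)$ modulo $\mathbb{Z}_{p}(\!(q)\!)\otimes\mathbb{Q}_{p}$, then divide through by the nonzero rational integer $C_{g}$. Since $\mathbb{Z}_{p}(\!(q)\!)\otimes\mathbb{Q}_{p}$ is stable under multiplication by elements of $\mathbb{Q}_{p}^{\times}$, I obtain
\begin{equation*}
F^{+}(q)+\frac{\mathbb{S}(\Lambda)+A_{1}}{C_{g}}E_{g}(q)+\frac{eA_{2}}{C_{g}}E_{g\mid V_{p}}(q)\in\mathbb{Z}_{p}(\!(q)\!)\otimes\mathbb{Q}_{p}.
\end{equation*}

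\textbf{Step 3.} Invoke the uniqueness clause of Proposition \ref{prop. ramuda}: any decomposition of $F^{+}$ of this shape with coefficients in $\mathbb{Q}_{p}$ must have those coefficients equal to $\lambda_{p}$ and $\mu_{p}$. Reading off the coefficient of $E_{g\mid V_{p}}(q)$ gives $\mu_{p}=-eA_{2}/C_{g}$. By Theorem \ref{mu=alpha}, $\mu_{p}=\alpha_{g}$, so $C_{g}\alpha_{g}=-eA_{2}$. Since $e$ is a $p$-adic unit by \eqref{e} and $A_{2}$ is a $p$-adic unit by Theorem \ref{thm A_{2} unit}, their product is a $p$-adic unit, completing the proof. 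The one point to be mildly careful about is verifying that all the constants $(\mathbb{S}(\Lambda)+A_{1})/C_{g}$ and $eA_{2}/C_{g}$ indeed lie in $\mathbb{Q}_{p}$, so that the uniqueness statement of Proposition \ref{prop. ramuda} applies; this is immediate from the inputs listed above.
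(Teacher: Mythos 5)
Your proposal is correct and follows essentially the same route as the paper's own proof: combine \eqref{zeta bounded} with Theorem \ref{thm N-CF bounded} to get a decomposition $C_{g}F^{+}+(\mathbb{S}(\Lambda)+A_{1})E_{g}+eA_{2}E_{g\mid V_{p}}\in\mathbb{Z}_{p}(\!(q)\!)\otimes\mathbb{Q}_{p}$, then read off $\alpha_{g}=-C_{g}^{-1}eA_{2}$ via Theorem \ref{mu=alpha} and conclude from the fact that $eA_{2}$ is a $p$-adic unit. Your version is merely slightly more explicit about dividing by $C_{g}$ and invoking the uniqueness clause of Proposition \ref{prop. ramuda}.
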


\begin{proof}
By Theorem \ref{thm N-CF bounded}, we have that 
 \begin{align*}
 \mathcal{N}(q)-C_{g}F(q)=\mathcal{N}^{+}(q)-C_{g}F^{+}(q) =\zeta ( \Lambda ,E_{g}(q)) -\mathbb{S}(\Lambda)E_{g}(q)-C_{g}F^{+}(q) \in \mathbb{Z}_{p}(\!(q)\!) \otimes \mathbb{Q}_{p}.
 \end{align*}
 Hence, we obtain that 
 \begin{align*}
 C_{g}F^{+}+\left(\mathbb{S}(\Lambda)+A_{1}\right)E_{g}(q)+eA_{2}E_{g\mid V_{p}}(q) \in \mathbb{Z}_{p}(\!(q)\!) \otimes \mathbb{Q}_{p}
 \end{align*}
 by using of \eqref{zeta bounded}.
 Taking into the account Theorem \ref{mu=alpha}, we obtain that
 \begin{align*}
 \alpha_{g}=-C_{g}^{-1}(eA_{2}).
 \end{align*}
 The claim follows from that $eA_{2}$ is a $p$-adic unit.
\end{proof}

\section*{Declarations}
\textbf{Conflict of interest} The author declares that he has no conflict of interest.

\bibliographystyle{amsplain}
\bibliography{References2}

\newcommand{\noopsort}[1]{}
\providecommand{\bysame}{\leavevmode\hbox to3em{\hrulefill}\thinspace}
\providecommand{\MR}{\relax\ifhmode\unskip\space\fi MR }
\providecommand{\MRhref}[2]{%
  \href{http://www.ams.org/mathscinet-getitem?mr=#1}{#2}
}
\providecommand{\href}[2]{#2}
\begin{thebibliography}{10}

\bibitem{alfes2015weierstrass}
Claudia Alfes, Michael Griffin, Ken Ono, and Larry Rolen, \emph{Weierstrass
  mock modular forms and elliptic curves}, Research in Number Theory \textbf{1}
  (2015), 1--31.

\bibitem{bannai2017radius}
Kenichi Bannai, Shinichi Kobayashi, and Seidai Yasuda, \emph{The radius of
  convergence of the $p$-adic sigma function}, Mathematische Zeitschrift
  \textbf{286} (2017), no.~1-2, 751--781.

\bibitem{bringmann2017harmonic}
Kathrin Bringmann, Amanda Folsom, Ken Ono, and Larry Rolen, \emph{Harmonic
  {M}aass forms and mock modular forms: theory and applications}, vol.~64,
  American Mathematical Soc., Providence, 2017.

\bibitem{bringmann2012mock}
Kathrin Bringmann, Pavel Guerzhoy, and Ben Kane, \emph{Mock modular forms as
  $p$-adic modular forms}, Transactions of the American Mathematical Society
  \textbf{364} (2012), no.~5, 2393--2410.

\bibitem{bruinier2004two}
Jan Bruinier and Jens Funke, \emph{On two geometric theta lifts}, Duke
  Mathematical Journal \textbf{125} (2004), 45--90.

\bibitem{bruinier2008differential}
Jan~H Bruinier, Ken Ono, and Robert~C Rhoades, \emph{Differential operators for
  harmonic weak {M}aass forms and the vanishing of {H}ecke eigenvalues},
  Mathematische Annalen \textbf{342} (2008), no.~3, 673--693.

\bibitem{candelori2017geometric}
Luca Candelori and Francesc Castella, \emph{A geometric perspective on $p$-adic
  properties of mock modular forms}, Research in the Mathematical Sciences
  \textbf{4} (2017), 1--15.

\bibitem{cremona1997algorithms}
John~E Cremona, \emph{Algorithms for modular elliptic curves full canadian
  binding}, CUP Archive, Cambridge, 1997.

\bibitem{ehlen2024harmonic}
Stephan Ehlen, Yingkun Li, and Markus Schwagenscheidt, \emph{{Harmonic Maass
  forms associated with CM newforms}}, Journal f^^c3^^bcr die reine und
  angewandte Mathematik (Crelles Journal) \textbf{2024} (2024), no.~813,
  133--158.

\bibitem{guerzhoy2014zagier}
Pavel Guerzhoy, \emph{On {Z}agier's adele}, Research in the Mathematical
  Sciences \textbf{1} (2014), 1--19.

\bibitem{guerzhoy2010p}
Pavel Guerzhoy, Zachary~A Kent, and Ken Ono, \emph{$p$-adic coupling of mock
  modular forms and shadows}, Proceedings of the National Academy of Sciences
  \textbf{107} (2010), no.~14, 6169--6174.

\bibitem{kobayashi2003iwasawa}
Shin-ichi Kobayashi, \emph{Iwasawa theory for elliptic curves at supersingular
  primes}, Inventiones mathematicae \textbf{152} (2003), no.~1, 1--36.

\bibitem{tajima2023p}
Ryota Tajima, \emph{The $p$-adic constant for mock modular forms associated to
  {CM} forms}, The Ramanujan Journal (2023), 1--13.

\bibitem{zwegers2008mock}
Sander Zwegers, \emph{Mock theta functions}, 2008.

\end{thebibliography}

\end{document}